\newtheorem{theorem}{Theorem}[section]
\theoremstyle{plain}
\newtheorem{definition}{Definition}[section]
\newtheorem{lemma}{Lemma}[section]
\newtheorem{proposition}{Proposition}[section]
\newtheorem{remark}{Remark}[section]
\numberwithin{equation}{section}
\begin{document}
\title[]{ Homogenization of the Stokes type Model in Porous media with slip
boundary conditions}
\author{Lazarus Signing}
\address{University of Ngaoundere, Department of Mathematics and Computer
Science, P.O.Box 454 Ngaoundere (Cameroon).}
\email{lsigning@yahoo.fr}
\urladdr{}
\thanks{}
\curraddr{ }
\email{ }
\urladdr{}
\thanks{}
\address{ }
\urladdr{}
\date{}
\subjclass[2000]{ 35B27, 35Q30, 76S05}
\keywords{Periodic homogenization, Stokes equations, Porous media, Two-scale
convergence.}
\dedicatory{}
\thanks{}

\begin{abstract}
This work is devoted to the study of the limiting behaviour of the Stokes
type fluid flows in porous media. The boundary conditions here are of the
Fourier-Neumann's type on the boundary of the holes. Under the periodic
hypothesis on the structure of the medium and on the coefficients of the
viscosity tensor, one convergence result is proved. Our approach is the well
known two-scale convergence method.
\end{abstract}

\maketitle

\section{Introduction}

Let $\Omega $ be a smooth bounded open set of $\mathbb{R}_{x}^{N}$ (the $N$%
-dimensional numerical space of variables $x=\left( x_{1},...,x_{N}\right) $%
) with $N\geq 2$. We consider a compact subset $T$ of $\mathbb{R}_{y}^{N}$
with smooth boundary and nonempty interior such that 
\begin{equation}
T\subset Y=\left( -\frac{1}{2},\frac{1}{2}\right) ^{N}\text{.}  \label{eq1.1}
\end{equation}%
For any $\varepsilon >0$, we define 
\begin{equation}
t^{\varepsilon }=\left\{ k\in \mathbb{Z}^{N}:\varepsilon \left( k+T\right)
\subset \Omega \right\} \text{,}  \label{eq1.1a}
\end{equation}%
\begin{equation}
T^{\varepsilon }=\underset{k\in t^{\varepsilon }}{\cup }\varepsilon \left(
k+T\right)  \label{eq1.1b}
\end{equation}%
and%
\begin{equation}
\Omega ^{\varepsilon }=\Omega \backslash T^{\varepsilon }\text{,}
\label{eq1.1c}
\end{equation}%
where $\mathbb{Z}$ denotes the integers. Throughout this study, $\Omega
^{\varepsilon }$ is a medium constituted of a porous membrane filled with
fluid, and $T$ is the reference solid part while $\varepsilon \left(
k+T\right) $ is an obstacle of size $\varepsilon $. The subset $%
T^{\varepsilon }$ is therefore the union of solid particules of size $%
\varepsilon $ in the porous domain $\Omega ^{\varepsilon }$. We denote by $%
\mathbf{n}=\left( n_{j}\right) _{1\leq j\leq N}$ the outward unit normal to $%
\partial T^{\varepsilon }$ with respect to $\Omega ^{\varepsilon }$.

For any Roman character such as $i$, $j$ (with $1\leq i,j\leq N$), $u^{i}$
(resp. $u^{j}$) denotes the $i$-th (resp. $j$-th) component of a vector
function $\mathbf{u}$ in $L_{loc}^{1}\left( \Omega \right) ^{N}$ or in $%
L_{loc}^{1}\left( \mathbb{R}^{N}\right) ^{N}$. Further, for any real $%
0<\varepsilon <1$, we define $u^{\varepsilon }$ as 
\begin{equation*}
u^{\varepsilon }\left( x\right) =u\left( \frac{x}{\varepsilon }\right) \text{%
\qquad }\left( x\in \Omega \right)
\end{equation*}%
for $u\in L_{loc}^{1}\left( \mathbb{R}_{y}^{N}\right) $. More generally, for 
$u\in L_{loc}^{1}\left( \Omega \times \mathbb{R}_{y}^{N}\right) $, it is
customary to put%
\begin{equation*}
u^{\varepsilon }\left( x\right) =u\left( x,\frac{x}{\varepsilon }\right) 
\text{\qquad }\left( x\in \Omega \right)
\end{equation*}%
whenever the right-hand side makes sense (see, e.g., \cite{bib9}).

Let $a_{ij}$ $\left( 1\leq i,j\leq N\right) $ and $\mathcal{\theta }$ be
real functions in $L^{\infty }\left( \mathbb{R}^{N}\right) $ such that: 
\begin{equation}
a_{ij}=a_{ji}\text{, and }\mathcal{\theta }\left( y\right) \geq \alpha _{0}%
\text{ a.e.in }y\in \mathbb{R}^{N}\text{,}  \label{eq1.2a}
\end{equation}%
\begin{equation}
\sum_{i,j=1}^{N}a_{ij}\left( y\right) \zeta _{j}\zeta _{i}\geq \alpha
\left\vert \zeta \right\vert ^{2}\text{\qquad }\left( \zeta =\left( \zeta
_{j}\right) \in \mathbb{R}^{N}\right) \text{ a.e. in }y\in \mathbb{R}^{N}%
\text{,}  \label{eq1.2}
\end{equation}%
where $\alpha >0$ and $\alpha _{0}>0$ are constants. We consider the partial
differential operator $P^{\varepsilon }$ on $\Omega $ defined by%
\begin{equation*}
P^{\varepsilon }=-\sum_{i,j=1}^{N}\frac{\partial }{\partial x_{i}}\left(
a_{ij}^{\varepsilon }\frac{\partial }{\partial x_{j}}\right) \text{.}
\end{equation*}%
The operator $P^{\varepsilon }$ acts on scalar functions, say $\varphi \in
H^{1}\left( \Omega \right) $. However, we may as well view $P^{\varepsilon }$
as acting on vector functions $\mathbf{u}=\left( u^{i}\right) \in
H^{1}\left( \Omega \right) ^{N}$ in a \textit{diagonal way}, i.e.,%
\begin{equation*}
\left( P^{\varepsilon }\mathbf{u}\right) ^{i}=P^{\varepsilon }u^{i}\text{%
\qquad }\left( i=1,...,N\right) \text{.}
\end{equation*}

For any fixed $0<\varepsilon <1$, we consider the boundary value problem%
\begin{equation}
P^{\varepsilon }\mathbf{u}_{\varepsilon }+\mathbf{grad}p_{\varepsilon }=%
\mathbf{f}^{\varepsilon }\text{ in }\Omega ^{\varepsilon }\text{,}
\label{eq1.3}
\end{equation}%
\begin{equation}
\qquad \qquad div\mathbf{u}_{\varepsilon }=0\text{ in }\Omega ^{\varepsilon }%
\text{,}  \label{eq1.4}
\end{equation}%
\begin{equation}
\qquad \qquad \qquad \mathbf{u}_{\varepsilon }=0\text{ on }\partial \Omega 
\text{,}  \label{eq1.5}
\end{equation}%
\begin{equation}
\text{ }-p_{\varepsilon }\mathbf{n}+\frac{\partial \mathbf{u}_{\varepsilon }%
}{\partial \mathbf{n}_{P^{\varepsilon }}}+\varepsilon \mathcal{\theta }%
^{\varepsilon }\mathbf{u}_{\varepsilon }=0\text{ on }\partial T^{\varepsilon
}\qquad \qquad  \label{eq1.7}
\end{equation}%
where $\mathbf{f}=\left( f_{j}\right) \in L^{\infty }\left( \mathbb{R}%
_{y}^{N}\right) ^{N}$ is a vector function with real components. For the
variational formulation of (\ref{eq1.3})-(\ref{eq1.7}), let us introduce%
\begin{equation*}
\mathbf{V}_{\varepsilon }=\left\{ \mathbf{v}\in H^{1}\left( \Omega
^{\varepsilon };\mathbb{R}\right) ^{N}:div\mathbf{v=}0\text{ in }\Omega
^{\varepsilon }\text{ and }\mathbf{v=}0\text{ on }\partial \Omega \right\} 
\text{,}
\end{equation*}%
where $H^{1}\left( \Omega ^{\varepsilon };\mathbb{R}\right) $ is the space
of\ functions in\ the Sobolev$\ $space$\ H^{1}\left( \Omega ^{\varepsilon
}\right) $ with real values, and let $\mathbf{a}_{\varepsilon }\left(
.,.\right) $ be the bilinear form on $H^{1}\left( \Omega ^{\varepsilon };%
\mathbb{R}\right) ^{N}$ given\ by%
\begin{equation*}
\mathbf{a}_{\varepsilon }\left( \mathbf{u},\mathbf{v}\right)
=\sum_{i,j,k=1}^{N}\int_{\Omega ^{\varepsilon }}a_{ij}^{\varepsilon }\frac{%
\partial u^{k}}{\partial x_{j}}\frac{\partial v^{k}}{\partial x_{i}}%
dx+\varepsilon \int_{\partial T^{\varepsilon }}\mathcal{\theta }%
^{\varepsilon }\mathbf{u}{\small \cdot }\mathbf{v}d\sigma _{\varepsilon }
\end{equation*}%
for $\mathbf{u=}\left( u^{k}\right) $ and $\mathbf{v=}\left( v^{k}\right)
\in H^{1}\left( \Omega ^{\varepsilon };\mathbb{R}\right) ^{N}$, the dot
denoting the Euclidean inner product, and $d\sigma _{\varepsilon }$ being
the surface measure on $\partial T^{\varepsilon }$. The boundary value
problem\ (\ref{eq1.3})-(\ref{eq1.7}) naturally\ implies the following
variational equation:%
\begin{equation}
\left\{ 
\begin{array}{c}
\mathbf{u}_{\varepsilon }\in \mathbf{V}_{\varepsilon },\qquad \qquad \qquad
\qquad \qquad \quad \\ 
\mathbf{a}_{\varepsilon }\left( \mathbf{u}_{\varepsilon },\mathbf{v}\right)
=\int_{\Omega ^{\varepsilon }}\mathbf{f}^{\varepsilon }{\small \cdot }%
\mathbf{v}dx\text{ for all }\mathbf{v}\in \mathbf{V}_{\varepsilon }\text{.}%
\end{array}%
\right.  \label{eq1.8}
\end{equation}%
The variational\ problem\ (\ref{eq1.8}) is a classical\ one which\ admits a
unique solution, in view of\ (\ref{eq1.2a})-(\ref{eq1.2}). Further, it is
easy\ to check that (\ref{eq1.8}) leads to\ (\ref{eq1.3})-(\ref{eq1.7}).
Thus, the problem (\ref{eq1.3})-(\ref{eq1.7}) admits a unique solution $%
\left( \mathbf{u}_{\varepsilon },p_{\varepsilon }\right) $\ in $\mathbf{V}%
_{\varepsilon }\times \left( L^{2}\left( \Omega ^{\varepsilon };\mathbb{R}%
\right) \mathfrak{/}\mathbb{R}\right) $, where 
\begin{equation*}
L^{2}\left( \Omega ^{\varepsilon };\mathbb{R}\right) \mathfrak{/}\mathbb{R=}%
\left\{ v\in L^{2}\left( \Omega ^{\varepsilon };\mathbb{R}\right)
:\int_{\Omega ^{\varepsilon }}v\left( x\right) dx=0\right\} \text{.}
\end{equation*}

Our aim here is to investigate the asymptotic behavior, as $\varepsilon
\rightarrow 0$, of $\left( \mathbf{u}_{\varepsilon },p_{\varepsilon }\right) 
$ under the hypotheses that 
\begin{equation}
a_{ij}\left( y+k\right) =a_{ij}\left( y\right) \text{,\quad }\mathcal{\theta 
}\left( y+k\right) =\mathcal{\theta }\left( y\right) \text{ and }\mathbf{f}%
\left( y+k\right) =\mathbf{f}\left( y\right) \quad \left( 1\leq i,j\leq
N\right)  \label{eq1.8a}
\end{equation}%
for almost all $y\in \mathbb{R}^{N}$ and for all $k\in \mathbb{Z}^{N}$.

The study of this problem turns out to be of benefit to the modelling of
heterogeneous fluid flows, in particular multi-phase flows, fluids with
spatially varying viscosities, and others. This model has been considered
for the first time in \cite{bib3} in the case of a fixe domain $\Omega $. In 
\cite{bib6}, Conca has studied the homogenization of a classical
two-dimensional Stokes flow with the Fourier-Neumann conditions on the
boundary of the holes of a periodically perforated domain. We mention also
the paper by Cioranescu, Donato and Ene \cite{bib4} dealing with that
classical Stokes problem with exterior surface forces induced by an
electrical field on the boundary of the holes. In \cite{bib7} the
homogenization of the steady Stokes equations in a domain containing a
periodically perforated sieve with a non-slip boundary condition on the
sieve has also been investigated.

This paper deals with a case where the usual laplace operator is replace by
an elliptic linear partial differential operator of order two with
oscillating coefficients. As in \cite{bib6}, this paper is concerned with
the study of the asymtotic behavior of an incompressible fluid flow in
porous media with a purely mechanical slip boundary conditions.

By means of the \textit{two-scale convergence }techniques, which are nothing
but the \textit{sigma-convergence }in the periodic setting,\textit{\ }we
derive the homogenized model for (\ref{eq1.3})-(\ref{eq1.7}).

Unless otherwise specified, vector spaces throughout are considered over the
complex field, $\mathbb{C}$, and scalar functions are assumed to take
complex values. Let us recall some basic notations. If $X$ and $F$ denote a
locally compact space and a Banach space, respectively, then we write $%
\mathcal{C}\left( X;F\right) $ for continuous mappings of $X$ into $F$, and $%
\mathcal{B}\left( X;F\right) $ for those mappings in $\mathcal{C}\left(
X;F\right) $ that are bounded. We denote by $\mathcal{K}\left( X;F\right) $
the mappings in $\mathcal{C}\left( X;F\right) $ having compact supports.\ We
shall assume $\mathcal{B}\left( X;F\right) $ to be equipped with the
supremum norm $\left\Vert u\right\Vert _{\infty }=\sup_{x\in X}\left\Vert
u\left( x\right) \right\Vert $ ($\left\Vert {\small \cdot }\right\Vert $
denotes the norm in $F$). For shortness we will write $\mathcal{C}\left(
X\right) =\mathcal{C}\left( X;\mathbb{C}\right) $, $\mathcal{B}\left(
X\right) =\mathcal{B}\left( X;\mathbb{C}\right) $ and $\mathcal{K}\left(
X\right) =\mathcal{K}\left( X;\mathbb{C}\right) $. Likewise in the case when 
$F=\mathbb{C}$, the usual spaces $L^{p}\left( X;F\right) $ and $%
L_{loc}^{p}\left( X;F\right) $ ($X$ provided with a positive Radon measure)
will be denoted by $L^{p}\left( X\right) $ and $L_{loc}^{p}\left( X\right) $%
, respectively. Finally, the numerical space $\mathbb{R}^{N}$ and its open
sets are each provided with Lebesgue measure denoted by $dx=dx_{1}...dx_{N}$.

The rest of the paper is organized as follows. Section 2 is devoted to the
preliminaries while in Section 3, a convergence result is proved for (\ref%
{eq1.3})-(\ref{eq1.7}).

\section{Preliminary results}

Before we begin with preliminaries, let us note that, if $\mathbf{w=}\left(
w^{k}\right) _{1\leq k\leq N}$ with $w^{k}\in L^{p}\left( \mathcal{O}\right) 
$, or if $\mathbf{w=}\left( w^{ij}\right) _{1\leq i,j\leq N}$ with $%
w^{ij}\in L^{p}\left( \mathcal{O}\right) $, where $\mathcal{O}$ is an open
set in $\mathbb{R}^{N}$, we will sometimes write $\left\Vert \mathbf{w}%
\right\Vert _{L^{p}\left( \mathcal{O}\right) }$ for $\left\Vert \mathbf{w}%
\right\Vert _{L^{p}\left( \mathcal{O}\right) ^{N}}$ or for $\left\Vert 
\mathbf{w}\right\Vert _{L^{p}\left( \mathcal{O}\right) ^{N\times N}}$.

Let us first recall the following result on the construction (for $%
\varepsilon >0$) of a suitable extension operator sending $\mathbf{V}%
_{\varepsilon }$ into $H_{0}^{1}\left( \Omega ;\mathbb{R}\right) ^{N}$.

\begin{proposition}
\label{pr2.1} For each real $\varepsilon >0$, there exists an operator $%
\mathcal{P}_{\varepsilon }$ of $\mathbf{V}_{\varepsilon }$ into $%
H_{0}^{1}\left( \Omega ;\mathbb{R}\right) ^{N}$ with the following
properties: 
\begin{equation}
\mathcal{P}_{\varepsilon }\text{ sends continuously and linearly }\mathbf{V}%
_{\varepsilon }\text{ into }H_{0}^{1}\left( \Omega ;\mathbb{R}\right) ^{N}%
\text{;}  \label{eq2.1}
\end{equation}%
\begin{equation}
\left( \mathcal{P}_{\varepsilon }\mathbf{v}\right) {\small \mid }_{\Omega
^{\varepsilon }}=\mathbf{v}\text{ for all }\mathbf{v}\in \mathbf{V}%
_{\varepsilon }\text{;}  \label{eq2.2}
\end{equation}%
\begin{equation}
\left\Vert \nabla \left( \mathcal{P}_{\varepsilon }\mathbf{v}\right)
\right\Vert _{L^{2}\left( \Omega \right) }\leq c\left\Vert \nabla \mathbf{v}%
\right\Vert _{L^{2}\left( \Omega ^{\varepsilon }\right) }\text{ for all }%
\mathbf{v}\in \mathbf{V}_{\varepsilon }  \label{eq2.3}
\end{equation}%
where the constant $c>0$ depends solely on $Y$ and $T$.
\end{proposition}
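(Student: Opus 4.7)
The plan is to build a local extension on the reference cell $Y$ that preserves the divergence-free property, and then transplant it periodically to each cell $\varepsilon(k+Y)$ contained in $\Omega$.

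\textbf{Reference extension.} I would first construct an operator $R$ acting on $\mathbf{v}\in H^{1}(Y\setminus T)^{N}$ with $\operatorname{div}\mathbf{v}=0$ in $Y\setminus T$ and $\mathbf{v}\cdot\mathbf{n}=0$ on $\partial T$. Set $\mathbf{c}=|Y\setminus T|^{-1}\int_{Y\setminus T}\mathbf{v}\,dy\in\mathbb{R}^{N}$. Since $\int_{\partial T}\mathbf{n}\,d\sigma=\mathbf{0}$ (apply the divergence theorem to constants in $T$), the datum $\mathbf{v}-\mathbf{c}$ on $\partial T$ has vanishing total flux, so the interior Stokes problem
\begin{equation*}
-\Delta\mathbf{w}+\nabla q=0,\quad \operatorname{div}\mathbf{w}=0\text{ in }T,\quad \mathbf{w}=\mathbf{v}-\mathbf{c}\text{ on }\partial T
\end{equation*}
has a unique weak solution $\mathbf{w}\in H^{1}(T)^{N}$. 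Define $R\mathbf{v}=\mathbf{v}$ on $Y\setminus T$ and $R\mathbf{v}=\mathbf{w}+\mathbf{c}$ on $T$. The traces agree on $\partial T$ so $R\mathbf{v}\in H^{1}(Y)^{N}$; divergence-freeness of each piece plus continuity of the normal trace gives $\operatorname{div}(R\mathbf{v})=0$ in $Y$. Combining Stokes regularity in $T$, the trace theorem, and Poincar\'e--Wirtinger on the connected Lipschitz domain $Y\setminus T$ would yield
\begin{equation*}
\|\nabla R\mathbf{v}\|_{L^{2}(Y)}\le c_{0}\,\|\nabla\mathbf{v}\|_{L^{2}(Y\setminus T)},
\end{equation*}
with $c_{0}$ depending only on $Y$ and $T$.

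\textbf{Scaling and patching.} For $\mathbf{v}\in\mathbf{V}_{\varepsilon}$ and $k\in t^{\varepsilon}$, let $\mathbf{v}_{\varepsilon,k}(y)=\mathbf{v}(\varepsilon(k+y))$ for $y\in Y\setminus T$; these local dilations inherit the hypotheses of the reference step. I would then define
\begin{equation*}
(\mathcal{P}_{\varepsilon}\mathbf{v})(x)=\begin{cases}\mathbf{v}(x), & x\in\Omega^{\varepsilon},\\ (R\mathbf{v}_{\varepsilon,k})(\varepsilon^{-1}x-k), & x\in\varepsilon(k+T),\ k\in t^{\varepsilon}.\end{cases}
\end{equation*}
The two formulas coincide on each interface $\partial(\varepsilon(k+T))$, hence $\mathcal{P}_{\varepsilon}\mathbf{v}\in H^{1}(\Omega)^{N}$; divergence vanishes on each piece and normal traces match, so $\operatorname{div}(\mathcal{P}_{\varepsilon}\mathbf{v})=0$ in $\Omega$; and because $k\in t^{\varepsilon}$ forces $\varepsilon(k+T)\Subset\Omega$, we have $\mathcal{P}_{\varepsilon}\mathbf{v}=\mathbf{v}=0$ in a neighbourhood of $\partial\Omega$. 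A change of variables converts the reference inequality into $\|\nabla\mathcal{P}_{\varepsilon}\mathbf{v}\|^{2}_{L^{2}(\varepsilon(k+T))}\le c_{0}\|\nabla\mathbf{v}\|^{2}_{L^{2}(\varepsilon(k+Y\setminus T))}$ (the $\varepsilon^{N-2}$ factors on both sides cancel); summing over $k\in t^{\varepsilon}$ and adding $\|\nabla\mathbf{v}\|^{2}_{L^{2}(\Omega^{\varepsilon})}$ delivers (\ref{eq2.3}). Linearity is clear, and (\ref{eq2.2}) holds by construction.

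\textbf{Main obstacle.} The delicate point is the reference estimate: obtaining a bound purely in terms of the Dirichlet seminorm $\|\nabla\mathbf{v}\|_{L^{2}(Y\setminus T)}$ rather than the full $H^{1}$-norm. This forces the constants-removal trick via $\mathbf{c}$, which is compatible with the interior Stokes extension precisely because $\int_{\partial T}\mathbf{n}\,d\sigma=\mathbf{0}$ preserves the flux compatibility of the Dirichlet data. Poincar\'e--Wirtinger on $Y\setminus T$ then closes the argument and produces the universal constant $c_{0}(Y,T)$ required by (\ref{eq2.3}).
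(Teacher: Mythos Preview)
Your approach is essentially the same as the paper's: construct a divergence-preserving extension on the reference cell, then rescale and patch cell by cell. The paper encapsulates the reference step as Lemma~\ref{lem2.1}, which it states without proof, and then carries out exactly your scaling-and-summation argument. You go further by actually supplying a construction of the reference operator (interior Stokes problem on $T$ with the mean $\mathbf{c}$ subtracted so that Poincar\'e--Wirtinger yields a pure gradient bound); this is the classical Tartar-type construction and is correct under the implicit assumption that $Y\setminus T$ is connected.

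One small slip to fix: you write ``$\mathcal{P}_{\varepsilon}\mathbf{v}=\mathbf{v}=0$ in a neighbourhood of $\partial\Omega$''. The function $\mathbf{v}$ is not identically zero near $\partial\Omega$; what is true is that $\mathcal{P}_{\varepsilon}\mathbf{v}=\mathbf{v}$ in a neighbourhood of $\partial\Omega$ (since every hole $\varepsilon(k+T)$ with $k\in t^{\varepsilon}$ is compactly contained in $\Omega$), and therefore $\mathcal{P}_{\varepsilon}\mathbf{v}\big|_{\partial\Omega}=\mathbf{v}\big|_{\partial\Omega}=0$, which is what you need for $\mathcal{P}_{\varepsilon}\mathbf{v}\in H_{0}^{1}(\Omega)^{N}$. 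With that correction the argument is complete.
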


\begin{proof}
The proof of this Proposition follows the same line of argument as in \cite[%
proof of Lemma 7.1]{bib6} (see also \cite{bib5}). The details are left to
the reader.
\end{proof}

Let us go to our next purpose. We set 
\begin{equation}
\Theta =\tbigcup_{k\in \mathbb{Z}^{N}}\left( k+T\right) \text{.}
\label{eq2.6a}
\end{equation}%
Using the compactness of $T$, it is easy to check that $\Theta $ is closed
in $\mathbb{R}^{N}$. Let us set for any $\varepsilon >0$ 
\begin{equation}
Q^{\varepsilon }=\Omega \backslash \varepsilon \Theta \text{.}
\label{eq2.6b}
\end{equation}%
Then $Q^{\varepsilon }$ is an open set of $\mathbb{R}^{N}$ and clearly, $%
Q^{\varepsilon }\subset \Omega ^{\varepsilon }$. The set $Q^{\varepsilon }$
is made of two types of solid particules: on one hand, the solids of $\Omega
^{\varepsilon }$, on the other hand, the solids $\Omega \cap \varepsilon
\left( k+T\right) $ where $\varepsilon \left( k+T\right) $ intersects $%
\partial \Omega $. In view of the following results we well see that the
difference $\Omega ^{\varepsilon }\backslash Q^{\varepsilon }$ is of no
effect in the homogenization process.

\begin{lemma}
\label{lem2.2} Let $K\subset \Omega $ be a compact set ($K$ independent of $%
\varepsilon $). There exists some $\varepsilon _{0}>0$ such that $\Omega
^{\varepsilon }\backslash Q^{\varepsilon }\subset \left( \Omega \backslash
K\right) $ for any $0<\varepsilon \leq \varepsilon _{0}$.
\end{lemma}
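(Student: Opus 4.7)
The plan is to unpack what $\Omega^{\varepsilon}\setminus Q^{\varepsilon}$ actually consists of and then use a diameter-versus-distance argument.

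First, I would observe that a point $x\in\Omega^{\varepsilon}\setminus Q^{\varepsilon}$ satisfies $x\in\Omega$, $x\notin T^{\varepsilon}$, and $x\in\varepsilon\Theta$. By definition of $\Theta$, there exists $k\in\mathbb{Z}^{N}$ with $x\in\varepsilon(k+T)$. Since $x\notin T^{\varepsilon}$, necessarily $k\notin t^{\varepsilon}$, which means $\varepsilon(k+T)\not\subset\Omega$. Hence there exists some $y\in\varepsilon(k+T)$ with $y\in\mathbb{R}^{N}\setminus\Omega$. In other words, every point of $\Omega^{\varepsilon}\setminus Q^{\varepsilon}$ lies in a translated hole $\varepsilon(k+T)$ that meets the complement of $\Omega$.

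Next I would use the inclusion $T\subset Y=(-\tfrac12,\tfrac12)^{N}$ to bound the diameter: $\mathrm{diam}\bigl(\varepsilon(k+T)\bigr)\leq \varepsilon\,\mathrm{diam}(Y)=\varepsilon\sqrt{N}$. Therefore, for $x$ and $y$ as above,
\begin{equation*}
\mathrm{dist}\bigl(x,\mathbb{R}^{N}\setminus\Omega\bigr)\leq |x-y|\leq \varepsilon\sqrt{N}.
\end{equation*}

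Finally, since $K$ is compact and $\mathbb{R}^{N}\setminus\Omega$ is closed with $K\cap(\mathbb{R}^{N}\setminus\Omega)=\varnothing$, the number
\begin{equation*}
\delta:=\mathrm{dist}\bigl(K,\mathbb{R}^{N}\setminus\Omega\bigr)
\end{equation*}
is strictly positive. Choosing $\varepsilon_{0}>0$ so that $\varepsilon_{0}\sqrt{N}<\delta$, any $x\in\Omega^{\varepsilon}\setminus Q^{\varepsilon}$ with $0<\varepsilon\leq\varepsilon_{0}$ has $\mathrm{dist}(x,\mathbb{R}^{N}\setminus\Omega)\leq\varepsilon\sqrt{N}<\delta$, so $x\notin K$. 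This gives $\Omega^{\varepsilon}\setminus Q^{\varepsilon}\subset\Omega\setminus K$ as required.

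There is no real obstacle; the whole argument is a quantitative form of the geometric observation that the extra holes discarded in passing from $Q^{\varepsilon}$ to $\Omega^{\varepsilon}$ all hug the boundary $\partial\Omega$ within a strip of width $O(\varepsilon)$, which eventually avoids any fixed compact subset of $\Omega$. The only care needed is to make the characterization of $\Omega^{\varepsilon}\setminus Q^{\varepsilon}$ via the condition $\varepsilon(k+T)\not\subset\Omega$ precise, and to use $T\subset Y$ (rather than just $T$ compact) to get the explicit diameter bound $\varepsilon\sqrt{N}$.
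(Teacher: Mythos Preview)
Your proof is correct and rests on the same geometric idea as the paper's: points of $\Omega^{\varepsilon}\setminus Q^{\varepsilon}$ lie in translated holes $\varepsilon(k+T)$ that fail to fit inside $\Omega$, hence sit within an $O(\varepsilon)$-strip near $\partial\Omega$, which eventually misses any fixed compact $K\subset\Omega$.

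The execution differs, though. The paper introduces the auxiliary set $J^{\varepsilon}(\partial\Omega)=\bigcup\{\varepsilon(k+\overline{Y}):\varepsilon(k+\overline{Y})\cap\partial\Omega\neq\varnothing\}$, uses connectedness of $\overline{Y}$ to show $\Omega^{\varepsilon}\setminus Q^{\varepsilon}\subset J^{\varepsilon}(\partial\Omega)$, and then invokes an external result (\cite[Lemma~1]{bib7}) to get $J^{\varepsilon}(\partial\Omega)\subset U_{r}$ for small $\varepsilon$. Your argument bypasses both the auxiliary set and the cited lemma: you directly estimate $\mathrm{dist}(x,\mathbb{R}^{N}\setminus\Omega)\leq\varepsilon\sqrt{N}$ via the diameter of $\varepsilon(k+T)$ and compare with $\delta=\mathrm{dist}(K,\mathbb{R}^{N}\setminus\Omega)>0$. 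This is more elementary and fully self-contained; the paper's route has the minor advantage that the inclusion $\Omega^{\varepsilon}\setminus Q^{\varepsilon}\subset J^{\varepsilon}(\partial\Omega)$ is reused later (see Remark~\ref{rem2.1}) to show $\lambda(\Omega^{\varepsilon}\setminus Q^{\varepsilon})\to 0$.
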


\begin{proof}
Let $U$ be a neighborhood of $\partial \Omega $ in $\mathbb{R}^{N}$ such
that $U\subset \left( \mathbb{R}^{N}\backslash K\right) $. The first point
is to show that a real $\varepsilon _{0}>0$ exists such that 
\begin{equation}
J^{\varepsilon }\left( \partial \Omega \right) \subset U\text{ for any }%
0<\varepsilon \leq \varepsilon _{0}  \label{eq2.7}
\end{equation}%
where $J^{\varepsilon }\left( \partial \Omega \right) $ denotes the union of
all $\varepsilon \left( k+\overline{Y}\right) $ as $k$ ranges over $%
j^{\varepsilon }\left( \partial \Omega \right) =\left\{ k\in \mathbb{Z}%
^{N}:\varepsilon \left( k+\overline{Y}\right) \cap \partial \Omega \neq
\varnothing \right\} $. But, as the sets 
\begin{equation*}
U_{r}=\left\{ x\in \mathbb{R}^{N}:d\left( x,\partial \Omega \right) \leq
r\right\}
\end{equation*}%
(where $d$ denotes the Euclidean metric in $\mathbb{R}^{N}$ and $r$ ranges
over positive real numbers) form the base of neighborhoods of $\partial
\Omega $, we see that we may assume without loss of generality that $U=U_{r}$
for a suitable $r>0$. Then, according to \cite[Lemma 1]{bib8}, there is some 
$\varepsilon _{0}>0$ such that $J^{\varepsilon }\left( \partial \Omega
\right) \subset U_{r}$ for any $0<\varepsilon \leq \varepsilon _{0}$, which
shows (\ref{eq2.7}). The next point is to show that 
\begin{equation}
\Omega ^{\varepsilon }\backslash Q^{\varepsilon }\subset J^{\varepsilon
}\left( \partial \Omega \right) \text{\qquad }\left( \varepsilon >0\right) 
\text{.}  \label{eq2.8}
\end{equation}%
Let $x\in \Omega ^{\varepsilon }\backslash Q^{\varepsilon }$, then $x\in
\Omega ^{\varepsilon }$and $x\in \varepsilon \Theta $. Thus, there exists
some $k\in \mathbb{Z}^{N}$ such that $k\notin t^{\varepsilon }$ and $x\in
\varepsilon \left( k+T\right) $. Hence, $\varepsilon \left( k+\overline{Y}%
\right) $ is not contained in $\Omega $ and $\varepsilon \left( k+\overline{Y%
}\right) \cap \Omega \neq \varnothing $. Since $\overline{Y}$ is connected,
we have $\varepsilon \left( k+\overline{Y}\right) \cap \partial \Omega \neq
\varnothing $ and thereby $k\in j^{\varepsilon }\left( \partial \Omega
\right) $. Thus, $x\in J^{\varepsilon }\left( \partial \Omega \right) $ and (%
\ref{eq2.8}) holds true. The lemma follows at once by (\ref{eq2.7}) and (\ref%
{eq2.8}).
\end{proof}

\begin{remark}
\label{rem2.1} As $\varepsilon \rightarrow 0$ we have $\lambda \left( \Omega
^{\varepsilon }\backslash Q^{\varepsilon }\right) \rightarrow 0$ ($\lambda $
denotes Lebesgue measure on $\mathbb{R}^{N}$). Indeed, this follows by
combining (\ref{eq2.8}) with the fact that $\lambda \left( J^{\varepsilon
}\left( \partial \Omega \right) \right) \rightarrow \lambda \left( \partial
\Omega \right) =0$ (see \cite[proof of Theorem 4.3]{bib11}).
\end{remark}

Now, let us turn to some fundamental preliminary results on the \textit{%
sigma-convergence }in the periodic setting.

Let us first recall that a function $u\in L_{loc}^{1}\left( \mathbb{R}%
_{y}^{N}\right) $ is said to be $Y$-periodic if for each $k\in \mathbb{Z}%
^{N} $, we have $u\left( y+k\right) =u\left( y\right) $ almost everywhere
(a.e.) in $y\in \mathbb{R}^{N}$. If in addition $u$ is continuous, then the
preceding equality holds for every $y\in \mathbb{R}^{N}$, of course. The
space of all $Y$-periodic continuous complex functions on $\mathbb{R}%
_{y}^{N} $ is denoted by $\mathcal{C}_{per}\left( Y\right) $; that of all $Y$%
-periodic functions in $L_{loc}^{p}\left( \mathbb{R}_{y}^{N}\right) $ $%
\left( 1\leq p<\infty \right) $ is denoted by $L_{per}^{p}\left( Y\right) $. 
$\mathcal{C}_{per}\left( Y\right) $ is a Banach space under the supremum
norm on $\mathbb{R}^{N}$, whereas $L_{per}^{p}\left( Y\right) $ is a Banach
space under the norm 
\begin{equation*}
\left\Vert u\right\Vert _{L^{p}\left( Y\right) }=\left( \int_{Y}\left\vert
u\left( y\right) \right\vert ^{p}dy\right) ^{\frac{1}{p}}\text{ }\left( u\in
L_{per}^{p}\left( Y\right) \right) \text{.}
\end{equation*}

We will need the space $H_{per}^{1}\left( Y\right) $ of functions in $%
H_{loc}^{1}\left( \mathbb{R}_{y}^{N}\right) =W_{loc}^{1,2}\left( \mathbb{R}%
_{y}^{N}\right) $ which are $Y$-periodic, and the space $H_{\#}^{1}\left(
Y\right) $ of functions $u\in H_{per}^{1}\left( Y\right) $ such that $%
\int_{Y}\left( y\right) dy=0$. Provided with the gradient norm, 
\begin{equation*}
\left\Vert u\right\Vert _{H_{\#}^{1}\left( Y\right) }=\left(
\int_{Y}\left\vert \nabla _{y}u\right\vert ^{2}dy\right) ^{\frac{1}{2}}\text{
}\left( u\in H_{\#}^{1}\left( Y\right) \right) \text{,}
\end{equation*}%
where $\nabla _{y}u=\left( \frac{\partial u}{\partial y_{1}},...,\frac{%
\partial u}{\partial y_{N}}\right) $, $H_{\#}^{1}\left( Y\right) $ is a
Hilbert space.

Before we can recall the concept of \textit{sigma-convergence} in the
present periodic setting or the \textit{two-scale convergence}, let us
introduce one further notation. The letter $E$ throughout will denote a
family of real numbers $0<\varepsilon <1$ admitting $0$ as an accumulation
point. For example, $E$ may be the whole interval $\left( 0,1\right) $; $E$
may also be an ordinary sequence $\left( \varepsilon _{n}\right) _{n\in 
\mathbb{N}}$ with $0<\varepsilon _{n}<1$ and $\varepsilon _{n}\rightarrow 0$
as $n\rightarrow \infty $. In the latter case $E$ will be referred to as a 
\textit{fundamental sequence}.

Let $\Omega $ be a bounded open set in $\mathbb{R}_{x}^{N}$ and let $1\leq
p<\infty $.

\begin{definition}
\label{def2.1} A sequence $\left( u_{\varepsilon }\right) _{\varepsilon \in
E}\subset L^{p}\left( \Omega \right) $ is said to:

(i) weakly $\Sigma $-converge in $L^{p}\left( \Omega \right) $ to some $%
u_{0}\in L^{p}\left( \Omega ;L_{per}^{p}\left( Y\right) \right) $ if as

\noindent $E\ni \varepsilon \rightarrow 0$, 
\begin{equation}
\int_{\Omega }u_{\varepsilon }\left( x\right) \psi ^{\varepsilon }\left(
x\right) dx\rightarrow \int \int_{\Omega \times Y}u_{0}\left( x,y\right)
\psi \left( x,y\right) dxdy  \label{eq2.9}
\end{equation}%
\begin{equation*}
\begin{array}{c}
\text{for all }\psi \in L^{p^{\prime }}\left( \Omega ;\mathcal{C}%
_{per}\left( Y\right) \right) \text{ }\left( \frac{1}{p^{\prime }}=1-\frac{1%
}{p}\right) \text{, where }\psi ^{\varepsilon }\left( x\right) = \\ 
\psi \left( x,\frac{x}{\varepsilon }\right) \text{ }\left( x\in \Omega
\right) \text{;}%
\end{array}%
\end{equation*}

(ii) strongly $\Sigma $-converge in $L^{p}\left( \Omega \right) $ to some $%
u_{0}\in L^{p}\left( \Omega ;L_{per}^{p}\left( Y\right) \right) $ if the
following property is verified: 
\begin{equation*}
\left\{ 
\begin{array}{c}
\text{Given }\eta >0\text{ and }v\in L^{p}\left( \Omega ;\mathcal{C}%
_{per}\left( Y\right) \right) \text{ with} \\ 
\left\Vert u_{0}-v\right\Vert _{L^{p}\left( \Omega \times Y\right) }\leq 
\frac{\eta }{2}\text{, there is some }\alpha >0\text{ such} \\ 
\text{that }\left\Vert u_{\varepsilon }-v^{\varepsilon }\right\Vert
_{L^{p}\left( \Omega \right) }\leq \eta \text{ provided }E\ni \varepsilon
\leq \alpha \text{.}%
\end{array}%
\right.
\end{equation*}
\end{definition}

We will briefly express weak and strong $\Sigma $-convergence by writing $%
u_{\varepsilon }\rightarrow u_{0}$ in $L^{p}\left( \Omega \right) $-weak $%
\Sigma $ and $u_{\varepsilon }\rightarrow u_{0}$ in $L^{p}\left( \Omega
\right) $-strong $\Sigma $, respectively. Instead of repeating here the main
results underlying $\Sigma $-convergence theory for periodic structures, we
find it more convenient to draw the reader's attention to a few references
regarding two-scale convergence, e.g., \cite{bib1}, \cite{bib2}, \cite{bib8}
and \cite{bib9}. However, we recall below two fundamental results.

\begin{theorem}
\label{th2.1} Assume that $1<p<\infty $ and further $E$ is a fundamental
sequence. Let a sequence $\left( u_{\varepsilon }\right) _{\varepsilon \in
E} $ be bounded in $L^{p}\left( \Omega \right) $. Then, a subsequence $%
E^{\prime }$ can be extracted from $E$ such that $\left( u_{\varepsilon
}\right) _{\varepsilon \in E^{\prime }}$ weakly $\Sigma $-converges in $%
L^{p}\left( \Omega \right) $.
\end{theorem}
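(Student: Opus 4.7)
The plan is to follow the classical Nguetseng--Allaire scheme: extract a subsequence by a diagonal/separability argument, identify the weak* limit of the functionals $\psi\mapsto\int_\Omega u_\varepsilon\psi^\varepsilon\,dx$, and then realize that limit as an element of $L^p(\Omega;L^p_{per}(Y))$ by duality.

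First I would verify the basic continuity estimate for the evaluation $\psi\mapsto\psi^\varepsilon$. For $\psi\in L^{p'}(\Omega;\mathcal{C}_{per}(Y))$ one has
\begin{equation*}
\lVert\psi^\varepsilon\rVert_{L^{p'}(\Omega)}\le \lVert\psi\rVert_{L^{p'}(\Omega;\mathcal{C}_{per}(Y))},
\end{equation*}
because $|\psi(x,x/\varepsilon)|\le \sup_{y\in Y}|\psi(x,y)|$. Combined with the $L^p$ bound on $(u_\varepsilon)$ and Hölder's inequality, this makes the linear forms
\begin{equation*}
L_\varepsilon(\psi)=\int_{\Omega}u_\varepsilon(x)\psi^\varepsilon(x)\,dx
\end{equation*}
uniformly bounded on $L^{p'}(\Omega;\mathcal{C}_{per}(Y))$. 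The space $L^{p'}(\Omega;\mathcal{C}_{per}(Y))$ is separable (as a Bochner space with separable range over a separable measure space), so a Cantor diagonal construction applied to a countable dense subset, together with boundedness, yields a subsequence $E'\subset E$ along which $L_\varepsilon(\psi)$ converges for every $\psi$ in $L^{p'}(\Omega;\mathcal{C}_{per}(Y))$; call the limit $L(\psi)$.

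The key step, and the real obstacle, is to identify $L$ as a functional of norm controlled by the $L^{p'}(\Omega\times Y)$ norm (not merely by $\lVert\psi\rVert_{L^{p'}(\Omega;\mathcal{C}_{per}(Y))}$). For this I would invoke the classical mean-value / Riemann--Lebesgue property for $Y$-periodic admissible test functions, namely
\begin{equation*}
\lVert\psi^\varepsilon\rVert_{L^{p'}(\Omega)}\longrightarrow \Bigl(\int\!\!\int_{\Omega\times Y}|\psi(x,y)|^{p'}\,dx\,dy\Bigr)^{1/p'}\qquad \text{as }\varepsilon\to 0.
\end{equation*}
Combined with Hölder and the uniform $L^p$ bound $\lVert u_\varepsilon\rVert_{L^p(\Omega)}\le C$, this gives
\begin{equation*}
|L(\psi)|\le C\,\lVert\psi\rVert_{L^{p'}(\Omega\times Y)}\qquad \text{for all }\psi\in L^{p'}(\Omega;\mathcal{C}_{per}(Y)).
\end{equation*}

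To conclude, I would use the fact that $L^{p'}(\Omega;\mathcal{C}_{per}(Y))$ is dense in $L^{p'}(\Omega\times Y)$ (smooth compactly supported tensor products being dense in each), so $L$ extends uniquely to a continuous linear form on $L^{p'}(\Omega\times Y)$. Since $1<p<\infty$, the duality $L^{p'}(\Omega\times Y)^*=L^p(\Omega\times Y)$ provides $u_0\in L^p(\Omega\times Y)$ with $L(\psi)=\int\!\!\int_{\Omega\times Y}u_0\psi\,dx\,dy$, which is exactly \eqref{eq2.9}. Finally, identifying $u_0$ with an element of $L^p(\Omega;L^p_{per}(Y))$ is automatic: $Y$-periodicity in the second variable can be extended globally by the usual translation, and since test functions in $L^{p'}(\Omega;\mathcal{C}_{per}(Y))$ see only the values of $\psi$ on $\Omega\times Y$, the $Y$-periodic extension of $u_0(x,\cdot)$ is unambiguous.
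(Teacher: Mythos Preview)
Your argument is correct and is precisely the classical Nguetseng--Allaire compactness proof. Note, however, that the paper does not supply its own proof of Theorem~\ref{th2.1}: it is stated as a fundamental result and the reader is referred to \cite{bib1}, \cite{bib2}, \cite{bib7}, \cite{bib8}. The analogous surface version, Theorem~\ref{th2.3}, \emph{is} proved in the paper, and that proof follows exactly the scheme you outline (uniform bound on the linear forms, weak* compactness via separability/Banach--Alaoglu, sharpening the bound to the $L^{p'}(\Omega\times\partial T)$ norm via Lemma~\ref{lem2.3}, extension by density, Riesz representation). Your use of a Cantor diagonal extraction in place of an explicit appeal to Banach--Alaoglu is a cosmetic difference only.
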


\begin{theorem}
\label{th2.2} Let $E$ be a fundamental sequence. Suppose a sequence $\left(
u_{\varepsilon }\right) _{\varepsilon \in E}$ is bounded in $H^{1}\left(
\Omega \right) =W^{1,2}\left( \Omega \right) $. Then, a subsequence $%
E^{\prime }$ can be extracted from $E$ such that, as $E^{\prime }\ni
\varepsilon \rightarrow 0$, 
\begin{equation*}
u_{\varepsilon }\rightarrow u_{0}\text{ in }H^{1}\left( \Omega \right) \text{%
-weak,\qquad \qquad \qquad \qquad \qquad \qquad }
\end{equation*}%
\begin{equation*}
u_{\varepsilon }\rightarrow u_{0}\text{ in }L^{2}\left( \Omega \right) \text{%
-weak }\Sigma \text{,\qquad \qquad \qquad \qquad \qquad }\quad
\end{equation*}%
\begin{equation*}
\frac{\partial u_{\varepsilon }}{\partial x_{j}}\rightarrow \frac{\partial
u_{0}}{\partial x_{j}}+\frac{\partial u_{1}}{\partial y_{j}}\text{ in }%
L^{2}\left( \Omega \right) \text{-weak }\Sigma \text{ }\left( 1\leq j\leq
N\right) \text{,}
\end{equation*}%
where $u_{0}\in H^{1}\left( \Omega \right) $, $u_{1}\in L^{2}\left( \Omega
;H_{\#}^{1}\left( Y\right) \right) $.
\end{theorem}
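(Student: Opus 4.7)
The plan is to use the two-scale compactness Theorem \ref{th2.1} twice, first on $(u_\varepsilon)$ and then on each $(\partial u_\varepsilon/\partial x_j)$, and finally to identify the structure of the resulting limits by a divergence-free test function trick.

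First, since $(u_\varepsilon)$ is bounded in $H^1(\Omega)$, I would extract a subsequence $E'\subset E$ along which $u_\varepsilon\to u_0$ weakly in $H^1(\Omega)$ for some $u_0\in H^1(\Omega)$; by Rellich--Kondrachov this upgrades to strong convergence in $L^2(\Omega)$. Applying Theorem \ref{th2.1} to $(u_\varepsilon)$ and to the $N$ sequences $(\partial u_\varepsilon/\partial x_j)$, each bounded in $L^2(\Omega)$, and passing to a further subsequence (still denoted $E'$), one obtains a weak $\Sigma$-limit $v_0\in L^2(\Omega;L^2_{per}(Y))$ for $u_\varepsilon$ and weak $\Sigma$-limits $\chi_j\in L^2(\Omega;L^2_{per}(Y))$ for the derivatives. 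To identify $v_0=u_0$, I would test against $\psi(x,y)=\varphi(x)\beta(y)$ with $\varphi\in\mathcal{D}(\Omega)$ and $\beta\in\mathcal{C}_{per}(Y)$: the strong $L^2$ convergence of $u_\varepsilon$ and the classical fact that $\beta^\varepsilon\to \int_Y\beta\,dy$ weakly in $L^2(\Omega)$ give $\int_\Omega u_\varepsilon\psi^\varepsilon dx\to \int_\Omega u_0\varphi\,dx\int_Y\beta\,dy$, which forces $v_0(x,y)=u_0(x)$ almost everywhere.

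Next I would identify $\chi_j$. Fix $\boldsymbol{\Phi}=(\Phi^1,\ldots,\Phi^N)\in\mathcal{D}(\Omega;\mathcal{C}^\infty_{per}(Y))^N$ with $\mathrm{div}_y\boldsymbol{\Phi}=0$, and integrate by parts:
\begin{equation*}
\int_\Omega u_\varepsilon\sum_{j=1}^N\frac{\partial \Phi^{j,\varepsilon}}{\partial x_j}dx=-\int_\Omega\sum_{j=1}^N\frac{\partial u_\varepsilon}{\partial x_j}\Phi^{j,\varepsilon}dx.
\end{equation*}
Since $\mathrm{div}_y\boldsymbol{\Phi}=0$, the chain rule gives $\sum_j\partial\Phi^{j,\varepsilon}/\partial x_j=(\mathrm{div}_x\boldsymbol{\Phi})^\varepsilon$, so passing to the $\Sigma$-limit on both sides yields
\begin{equation*}
\int\!\!\!\int_{\Omega\times Y}u_0\,\mathrm{div}_x\boldsymbol{\Phi}\,dxdy=-\int\!\!\!\int_{\Omega\times Y}\sum_{j=1}^N\chi_j\Phi^j\,dxdy,
\end{equation*}
and integration by parts in $x$ on the left (using $u_0\in H^1(\Omega)$) rewrites this as $\int\!\!\int(\chi_j-\partial u_0/\partial x_j)\Phi^j=0$ for every such $\boldsymbol{\Phi}$. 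Choosing first $\boldsymbol{\Phi}$ independent of $y$ shows that the $Y$-mean of $\chi_j-\partial u_0/\partial x_j$ vanishes in $x$; the full identity then says that $(\chi_j-\partial u_0/\partial x_j)_{j=1}^N$, viewed as a $Y$-periodic vector field with zero mean, is $L^2_y$-orthogonal to all smooth periodic divergence-free fields. By the classical $L^2$ Hodge decomposition on the torus, this orthogonal is precisely the space of gradients $\nabla_y u_1$ with $u_1\in H^1_\#(Y)$; a measurable selection argument in $x$ (using that the decomposition depends continuously on the vector field) then produces $u_1\in L^2(\Omega;H^1_\#(Y))$ with $\chi_j=\partial u_0/\partial x_j+\partial u_1/\partial y_j$.

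The step I expect to be most delicate is the last identification, namely passing from the $x$-pointwise orthogonality to a genuine element $u_1\in L^2(\Omega;H^1_\#(Y))$: one must guarantee measurability and the correct integrability in $x$ of the corrector. This is handled by noting that the linear map sending a mean-zero vector field to its Hodge potential is a bounded linear operator on $L^2_{per}(Y)^N$, so applying it pointwise in $x$ yields a Bochner-measurable $u_1$ whose $L^2(\Omega;H^1_\#(Y))$ norm is controlled by $\|\chi-\nabla_x u_0\|_{L^2(\Omega\times Y)}$, completing the proof.
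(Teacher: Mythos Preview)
The paper does not give its own proof of Theorem~\ref{th2.2}; it is stated as a known result and the reader is referred to \cite{bib1}, \cite{bib7}, \cite{bib8}. Your argument is essentially the standard proof from those references (closest to Allaire's version in \cite{bib1}): weak $H^1$/strong $L^2$ compactness to identify the two-scale limit of $u_\varepsilon$ as $u_0(x)$, and the divergence-free oscillating test-function trick together with the periodic de Rham/Hodge lemma to write $\chi_j-\partial u_0/\partial x_j$ as $\partial u_1/\partial y_j$. The final remark on obtaining $u_1\in L^2(\Omega;H^1_\#(Y))$ via the bounded linear Hodge-potential operator applied $x$-pointwise is exactly the clean way to close the argument; the proof is correct.
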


Now, let us also introduce the notion of \textit{two-scale convergence on
periodic surfaces. We denote by }$L_{per}^{p}\left( \partial T\right) $ the
space of functions $u$ in $L_{loc}^{p}\left( \partial \Theta \right) $
verifying $u\left( y+k\right) =u\left( y\right) $ for all $k\in \mathbb{Z}%
^{N}$ and for almost all $y\in \partial \Theta $ ($\partial \Theta $ is the
boundary of $\Theta $). Let $\partial T^{\varepsilon }$ be the boundary of $%
T^{\varepsilon }$ ($T^{\varepsilon }$ is given by (\ref{eq1.1b})).

\begin{definition}
\label{def2.2} A sequence $\left( u_{\varepsilon }\right) _{\varepsilon \in
E}$ with $u_{\varepsilon }\in L^{p}\left( \partial T^{\varepsilon }\right) $
for all $\varepsilon \in E$ is said to two-scale converge to some $u_{0}\in
L^{p}\left( \Omega ;L_{per}^{p}\left( \partial T\right) \right) $ if as $%
E\ni \varepsilon \rightarrow 0$,%
\begin{equation}
\varepsilon \int_{\partial T^{\varepsilon }}u_{\varepsilon }\left( x\right)
\psi ^{\varepsilon }\left( x\right) d\sigma _{\varepsilon }\left( x\right)
\rightarrow \int \int_{\Omega \times \partial T}u_{0}\left( x,y\right) \psi
\left( x,y\right) dxd\sigma \left( y\right)  \label{eq2.9a}
\end{equation}%
\begin{equation*}
\begin{array}{c}
\text{for all }\psi \in \mathcal{C}\left( \overline{\Omega };\mathcal{C}%
_{per}\left( Y\right) \right) \text{, where }\psi ^{\varepsilon }\left(
x\right) = \\ 
\psi \left( x,\frac{x}{\varepsilon }\right) \text{ }\left( x\in \Omega
\right)%
\end{array}%
\end{equation*}%
and where $d\sigma _{\varepsilon }$ and $d\sigma $ denote the surface
measures on $\partial T^{\varepsilon }$ and $\partial T$, respectively.
\end{definition}

The following result of convergence on periodic surfaces holds true.

\begin{theorem}
\label{th2.3} Let $1<p<+\infty $, and let $\left( u_{\varepsilon }\right)
_{\varepsilon \in E}$ be a sequence with $u_{\varepsilon }\in L^{p}\left(
\partial T^{\varepsilon }\right) $ for all $\varepsilon \in E$. Suppose that 
\begin{equation}
\varepsilon \int_{\partial T^{\varepsilon }}\left\vert u_{\varepsilon
}\left( x\right) \right\vert ^{p}d\sigma _{\varepsilon }\left( x\right) \leq
C  \label{eq2.9b}
\end{equation}
for all $\varepsilon \in E$, where $C$ is a constant independent of $%
\varepsilon $. Then, there exists a subsequence $E^{\prime }$ extracted from 
$E$ and a function $u_{0}\in L^{p}\left( \Omega ;L_{per}^{p}\left( \partial
T\right) \right) $ such that $\left( u_{\varepsilon }\right) _{\varepsilon
\in E^{\prime }}$ two-scale converges to $u_{0}$.
\end{theorem}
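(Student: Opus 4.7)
The plan is to obtain the limit $u_0$ via a duality/Riesz-representation argument, using the uniform bound (\ref{eq2.9b}) together with the fundamental rescaling identity for the surface measure on $\partial T^{\varepsilon}$. Concretely, for $\psi\in\mathcal{C}(\overline{\Omega};\mathcal{C}_{per}(Y))$, I define the linear functional
\[
L_{\varepsilon}(\psi)=\varepsilon\int_{\partial T^{\varepsilon}}u_{\varepsilon}(x)\psi^{\varepsilon}(x)\,d\sigma_{\varepsilon}(x),
\]
and the goal is to show that along a subsequence these functionals converge to a limit that is representable by integration against some $u_{0}\in L^{p}(\Omega;L_{per}^{p}(\partial T))$.

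The first step I would carry out is the purely geometric ``mean-value'' estimate: for every $\varphi\in\mathcal{C}(\overline{\Omega};\mathcal{C}_{per}(Y))$,
\[
\varepsilon\int_{\partial T^{\varepsilon}}\varphi^{\varepsilon}(x)\,d\sigma_{\varepsilon}(x)\longrightarrow \int_{\Omega}\!\!\int_{\partial T}\varphi(x,y)\,dx\,d\sigma(y)\qquad(\varepsilon\to 0).
\]
This is proved by decomposing $T^{\varepsilon}=\bigcup_{k\in t^{\varepsilon}}\varepsilon(k+T)$, changing variables $y=x/\varepsilon$ on each cell (which produces the factor $\varepsilon^{N-1}$ from $d\sigma_{\varepsilon}$, exactly compensated when multiplied by the prefactor $\varepsilon$ and the number $\mathrm{card}(t^{\varepsilon})\sim|\Omega|/\varepsilon^{N}$ of cells), then recognizing the result as a Riemann sum for the double integral; the cells in $j^{\varepsilon}(\overline{\Omega})\setminus t^{\varepsilon}$ contribute a vanishing boundary layer by Lemma \ref{lem2.2} and Remark \ref{rem2.1}. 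Applying this with $\varphi=|\psi|^{p'}$ yields the key a priori bound
\[
\varepsilon\int_{\partial T^{\varepsilon}}|\psi^{\varepsilon}|^{p'}d\sigma_{\varepsilon}\longrightarrow \|\psi\|_{L^{p'}(\Omega\times\partial T)}^{p'}.
\]

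Combining this with (\ref{eq2.9b}) and Hölder's inequality gives $|L_{\varepsilon}(\psi)|\leq C^{1/p}\bigl(\varepsilon\int_{\partial T^{\varepsilon}}|\psi^{\varepsilon}|^{p'}d\sigma_{\varepsilon}\bigr)^{1/p'}$, so that $\limsup_{\varepsilon}|L_{\varepsilon}(\psi)|\leq C^{1/p}\|\psi\|_{L^{p'}(\Omega\times\partial T)}$. Since $\mathcal{C}(\overline{\Omega};\mathcal{C}_{per}(Y))$ is separable, I can pick a countable dense subset and use a standard diagonal extraction to produce a subsequence $E'\subset E$ along which $L_{\varepsilon}(\psi)$ converges for every such $\psi$. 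The uniform $L^{p'}(\Omega\times\partial T)$-bound then lets me extend the limit, by density, to a continuous linear functional $L$ on the closure of $\mathcal{C}(\overline{\Omega};\mathcal{C}_{per}(Y))$ in $L^{p'}(\Omega\times\partial T)=L^{p'}(\Omega;L_{per}^{p'}(\partial T))$; by Riesz representation there is $u_{0}\in L^{p}(\Omega;L_{per}^{p}(\partial T))$ with $L(\psi)=\iint u_{0}\psi\,dx\,d\sigma(y)$, which is exactly (\ref{eq2.9a}).

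The main technical obstacle is the geometric rescaling step above: it requires a careful treatment of the incomplete cells near $\partial\Omega$ and a uniform-continuity argument to replace $\psi(x,x/\varepsilon)$ by its cell-average $\psi(\varepsilon k,\cdot)$ with an error going to $0$. A secondary point worth checking is that $\mathcal{C}(\overline{\Omega};\mathcal{C}_{per}(Y))$ is dense in $L^{p'}(\Omega;L_{per}^{p'}(\partial T))$, so that the functional $L$ does extend (and is uniquely determined) on the whole space; this follows from density of $\mathcal{C}_{per}(Y)$ in $L_{per}^{p'}(\partial T)$ combined with the standard density of $\mathcal{C}(\overline{\Omega};X)$ in $L^{p'}(\Omega;X)$ for any separable Banach space $X$.
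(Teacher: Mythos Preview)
Your proof is correct and follows essentially the same approach as the paper: the paper isolates your geometric rescaling step as Lemma~\ref{lem2.3} (giving both the uniform sup-norm bound (\ref{eq2.9c}) and the convergence (\ref{eq2.9d})), then applies H\"older, extracts a weak$^*$-convergent subsequence via Banach--Alaoglu in the dual of the separable space $\mathcal{C}(\overline{\Omega};\mathcal{C}_{per}(Y))$, and concludes by the same density and Riesz-representation argument you outline. The only cosmetic difference is that the paper invokes Banach--Alaoglu where you use a diagonal extraction, and it uses the sup-norm bound (\ref{eq2.9c}) explicitly to get the equicontinuity needed for that step.
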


The proof of the preceding theorem is based on the following lemma which is
easy to establish.

\begin{lemma}
\label{lem2.3} Let $\psi \in \mathcal{C}\left( \overline{\Omega };\mathcal{C}%
_{per}\left( Y\right) \right) $ and $1<p<+\infty $. There exists a constant $%
C_{0}$ independent of $\varepsilon $ such that 
\begin{equation}
\varepsilon \int_{\partial T^{\varepsilon }}\left\vert \psi \left( x,\frac{x%
}{\varepsilon }\right) \right\vert ^{p}d\sigma _{\varepsilon }\left(
x\right) \leq C_{0}\left\Vert \psi \right\Vert _{\infty }^{p}\text{ }
\label{eq2.9c}
\end{equation}%
for all $\varepsilon >0$. Moreover, as $\varepsilon \rightarrow 0$, 
\begin{equation}
\varepsilon \int_{\partial T^{\varepsilon }}\left\vert \psi \left( x,\frac{x%
}{\varepsilon }\right) \right\vert ^{p}d\sigma _{\varepsilon }\left(
x\right) \rightarrow \int \int_{\Omega \times \partial T}\left\vert \psi
\left( x,y\right) \right\vert ^{p}dxd\sigma \left( y\right) \text{.}
\label{eq2.9d}
\end{equation}
\end{lemma}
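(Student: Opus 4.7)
The plan is to reduce everything to a single reference cell and then recognize the resulting expression as a Riemann sum. First I would decompose
\[
\partial T^{\varepsilon}=\bigcup_{k\in t^{\varepsilon}}\varepsilon(k+\partial T)
\]
and, on each component, perform the change of variable $x=\varepsilon y$, which sends $d\sigma_{\varepsilon}(x)$ to $\varepsilon^{N-1}d\sigma(y)$. Together with the $Y$-periodicity of $\psi$ in the second variable (so that $\psi(x,y+k)=\psi(x,y)$), this gives
\[
\varepsilon\int_{\partial T^{\varepsilon}}\bigl|\psi(x,x/\varepsilon)\bigr|^{p}d\sigma_{\varepsilon}(x)=\varepsilon^{N}\sum_{k\in t^{\varepsilon}}\int_{\partial T}\bigl|\psi(\varepsilon(k+z),z)\bigr|^{p}d\sigma(z).
\]

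For the uniform bound \eqref{eq2.9c}, the pointwise estimate $|\psi(\varepsilon(k+z),z)|\le\|\psi\|_{\infty}$ yields
\[
\varepsilon\int_{\partial T^{\varepsilon}}\bigl|\psi(x,x/\varepsilon)\bigr|^{p}d\sigma_{\varepsilon}(x)\le\|\psi\|_{\infty}^{p}\,|\partial T|\,\varepsilon^{N}\,\#t^{\varepsilon},
\]
and since the cells $\varepsilon(k+Y)$, $k\in t^{\varepsilon}$, are pairwise disjoint and contained in $\Omega$, we have $\varepsilon^{N}\#t^{\varepsilon}\le|\Omega|$. Taking $C_{0}=|\Omega||\partial T|$ settles \eqref{eq2.9c}.

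For the limit \eqref{eq2.9d}, I would view the right-hand side of the cell decomposition as an $\varepsilon$-Riemann sum of the continuous function
\[
g(x)=\int_{\partial T}|\psi(x,z)|^{p}d\sigma(z)\qquad(x\in\overline{\Omega}).
\]
Since $\psi\in\mathcal{C}(\overline{\Omega};\mathcal{C}_{per}(Y))$, it is uniformly continuous on $\overline{\Omega}\times\overline{Y}$, so for any $\eta>0$ one has $|\psi(\varepsilon(k+z),z)^{p}-\psi(\varepsilon k,z)^{p}|<\eta$ uniformly in $k,z$ once $\varepsilon$ is small. Replacing $\varepsilon(k+z)$ by $\varepsilon k$ in the sum above introduces an error bounded by $\eta|\partial T|\,\varepsilon^{N}\#t^{\varepsilon}\le\eta|\partial T||\Omega|$, and leaves
\[
\varepsilon^{N}\sum_{k\in t^{\varepsilon}}g(\varepsilon k),
\]
which is a standard Riemann sum converging to $\int_{\Omega}g(x)\,dx=\iint_{\Omega\times\partial T}|\psi(x,y)|^{p}dx\,d\sigma(y)$ as $\varepsilon\to0$ (the contribution of cells indexed by $j^{\varepsilon}(\overline{\Omega})\setminus t^{\varepsilon}$ is absorbed into the Riemann-sum error because $\lambda(\Omega\setminus\bigcup_{k\in t^{\varepsilon}}\varepsilon(k+Y))\to0$, by the same argument used in Remark \ref{rem2.1}). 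Letting $\eta\to0$ after $\varepsilon\to0$ concludes \eqref{eq2.9d}.

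The only mildly delicate step is controlling the boundary cells, but this is exactly the content of Lemma \ref{lem2.2} and Remark \ref{rem2.1}, so it reduces to a routine Riemann-sum estimate. The rest is a direct computation with the change of variables and the $Y$-periodicity of $\psi$.
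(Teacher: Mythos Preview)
The paper does not actually prove Lemma \ref{lem2.3}; it only states that the lemma ``is easy to establish'' and then uses it in the proof of Theorem \ref{th2.3}. Your argument (cell decomposition of $\partial T^{\varepsilon}$, rescaling to the reference surface $\partial T$ via $x=\varepsilon(k+z)$, then identification of the resulting expression as a Riemann sum for the continuous function $g(x)=\int_{\partial T}|\psi(x,z)|^{p}d\sigma(z)$) is the standard route and is essentially correct.

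Two small technical points are worth tightening. First, by definition $t^{\varepsilon}=\{k\in\mathbb{Z}^{N}:\varepsilon(k+T)\subset\Omega\}$, so for $k\in t^{\varepsilon}$ it is the solid $\varepsilon(k+T)$, not the full cube $\varepsilon(k+Y)$, that is guaranteed to lie in $\Omega$. Your bound $\varepsilon^{N}\#t^{\varepsilon}\le|\Omega|$ therefore does not follow from disjointness of the cubes $\varepsilon(k+Y)$; use instead the disjoint inclusions $\varepsilon(k+T)\subset\Omega$ (valid since $T\subset Y$), which gives $\varepsilon^{N}|T|\,\#t^{\varepsilon}\le|\Omega|$ and hence \eqref{eq2.9c} with $C_{0}=|\Omega|\,|\partial T|/|T|$. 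Second, in the Riemann-sum step the sample point $\varepsilon k$ need not belong to $\overline{\Omega}$, so $g(\varepsilon k)$ is not a priori defined; replacing it by $\varepsilon(k+z_{0})$ for any fixed $z_{0}\in T$ (which does lie in $\Omega$ by the definition of $t^{\varepsilon}$) repairs this without altering the argument. With these cosmetic fixes your proof goes through.
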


\begin{proof}[Proof of Theorem \protect\ref{th2.3}]
For all $\psi \in \mathcal{C}\left( \overline{\Omega };\mathcal{C}%
_{per}\left( Y\right) \right) $ we set 
\begin{equation*}
F_{\varepsilon }\left( \psi \right) =\varepsilon \int_{\partial
T^{\varepsilon }}u_{\varepsilon }\left( x\right) \psi \left( x,\frac{x}{%
\varepsilon }\right) d\sigma _{\varepsilon }\left( x\right) \text{.}
\end{equation*}%
This defines a linear form on $\mathcal{C}\left( \overline{\Omega };\mathcal{%
C}_{per}\left( Y\right) \right) $. In view of (\ref{eq2.9b}) and (\ref%
{eq2.9c}), one has%
\begin{equation}
\left\vert F_{\varepsilon }\left( \psi \right) \right\vert \leq C^{\frac{1}{p%
}}\left( \varepsilon \int_{\partial T^{\varepsilon }}\left\vert \psi \left(
x,\frac{x}{\varepsilon }\right) \right\vert ^{p^{\prime }}d\sigma
_{\varepsilon }\left( x\right) \right) ^{\frac{1}{p^{\prime }}}\leq C^{\frac{%
1}{p}}C_{0}^{\frac{1}{p^{\prime }}}\left\Vert \psi \right\Vert _{\infty }
\label{eq2.10}
\end{equation}%
for all $\psi \in \mathcal{C}\left( \overline{\Omega };\mathcal{C}%
_{per}\left( Y\right) \right) $, where $p^{\prime }=\frac{p}{p-1}$. Thus $%
F_{\varepsilon }$ is a continuous linear form on $\mathcal{C}\left( 
\overline{\Omega };\mathcal{C}_{per}\left( Y\right) \right) $ for all $%
\varepsilon >0$, and further $\left( F_{\varepsilon }\right) _{\in E}$ is a
bounded sequence in the topological dual of the separable Banach space $%
\mathcal{C}\left( \overline{\Omega };\mathcal{C}_{per}\left( Y\right)
\right) $. Therefore, the Banach-Alaoglu theorem yields the existence of a
subsequence $E^{\prime }$ extracted from $E$ and a continuous linear form $%
F_{0}$ on $\mathcal{C}\left( \overline{\Omega };\mathcal{C}_{per}\left(
Y\right) \right) $ such that, as $E^{\prime }\ni \varepsilon \rightarrow 0$, 
\begin{equation*}
F_{\varepsilon }\left( \psi \right) \rightarrow F_{0}\left( \psi \right)
\end{equation*}%
for all $\psi \in \mathcal{C}\left( \overline{\Omega };\mathcal{C}%
_{per}\left( Y\right) \right) $. Using (\ref{eq2.9d}) and passing to the
limit in (\ref{eq2.10}) as $E^{\prime }\ni \varepsilon \rightarrow 0$ lead to%
\begin{equation*}
\left\vert F_{0}\left( \psi \right) \right\vert \leq C^{\frac{1}{p}}\left(
\int \int_{\Omega \times \partial T}\left\vert \psi \left( x,y\right)
\right\vert ^{p^{\prime }}dxd\sigma \left( y\right) \right) ^{\frac{1}{%
p^{\prime }}}=C^{\frac{1}{p}}\left\Vert \psi \right\Vert _{L^{p^{\prime
}}\left( \Omega ;L_{per}^{p^{\prime }}\left( \partial T\right) \right) }%
\text{,}
\end{equation*}%
for all $\psi \in \mathcal{C}\left( \overline{\Omega };\mathcal{C}%
_{per}\left( Y\right) \right) $. Consequently, $F_{0}$ can be extended to a
unique continuous linear form on $L^{p^{\prime }}\left( \Omega
;L_{per}^{p^{\prime }}\left( \partial T\right) \right) $, since the
restrictions on $\Omega \times \partial T$ of functions in $\mathcal{C}%
\left( \overline{\Omega };\mathcal{C}_{per}\left( Y\right) \right) $ are
dense in $L^{p^{\prime }}\left( \Omega ;L_{per}^{p^{\prime }}\left( \partial
T\right) \right) $. Thus, by the Riesz representation theorem, there exists
a unique $u_{0}\in L^{p}\left( \Omega ;L_{per}^{p}\left( \partial T\right)
\right) $ such that 
\begin{equation*}
F_{0}\left( \psi \right) =\int \int_{\Omega \times \partial T}u_{0}\left(
x,y\right) \psi \left( x,y\right) dxd\sigma \left( y\right) \text{,}
\end{equation*}%
for all $\psi \in L^{p^{\prime }}\left( \Omega ;L_{per}^{p^{\prime }}\left(
\partial T\right) \right) $, and we have (\ref{eq2.9a}) for all $\psi \in 
\mathcal{C}\left( \overline{\Omega };\mathcal{C}_{per}\left( Y\right)
\right) $.
\end{proof}

\begin{remark}
\label{rem2.2} It is of interest to know that if $u_{\varepsilon
}\rightarrow u_{0}$ in $L^{p}\left( \Omega \right) $-weak $\Sigma $, then (%
\ref{eq2.9}) holds for $\psi \in \mathcal{C}\left( \overline{\Omega }%
;L_{per}^{\infty }\left( Y\right) \right) $ (see \cite[Proposition 10]{bib10}
for the proof). Moreover if $\left( u_{\varepsilon }\right) _{\varepsilon
>0} $ with $u_{\varepsilon }\in L^{p}\left( \partial T^{\varepsilon }\right) 
$ two-scale converges to $u_{0}\in L^{p}\left( \Omega ;L_{per}^{p}\left(
\partial T\right) \right) $ (in the sense of Definition \ref{def2.2}), then (%
\ref{eq2.9a}) holds for $\psi \in \mathcal{C}\left( \overline{\Omega }%
;L_{per}^{\infty }\left( Y\right) \right) $. Indeed, for $\psi \in \mathcal{C%
}\left( \overline{\Omega };L_{per}^{\infty }\left( Y\right) \right) $ and $%
\eta >0$, there exists (by density) some $w\in \mathcal{C}\left( \overline{%
\Omega };\mathcal{C}_{per}\left( Y\right) \right) $ such that 
\begin{equation*}
\left\Vert \psi -w\right\Vert _{\infty }\leq \frac{\eta }{3c}\text{,}
\end{equation*}%
where 
\begin{equation*}
c\geq \sup \left\{ \left\vert \Omega \right\vert ^{\frac{1}{p^{\prime }}%
}\left\vert \partial T\right\vert ^{\frac{1}{p^{\prime }}}\left\Vert
u_{0}\right\Vert _{L^{p}\left( \Omega ;L_{per}^{p}\left( \partial T\right)
\right) }\text{, }C^{\frac{1}{p}}C_{0}^{\frac{1}{p^{\prime }}}\right\}
\end{equation*}%
and where $C$ and $C_{0}$ are the same constants in (\ref{eq2.10}). Further,
we have 
\begin{equation*}
\begin{array}{c}
\varepsilon \int_{\partial T^{\varepsilon }}u_{\varepsilon }\left( x\right)
\psi \left( x,\frac{x}{\varepsilon }\right) d\sigma _{\varepsilon }\left(
x\right) -\int \int_{\Omega \times \partial T}u_{0}\left( x,y\right) \psi
\left( x,y\right) dxd\sigma \left( y\right) = \\ 
\varepsilon \int_{\partial T^{\varepsilon }}u_{\varepsilon }\left( x\right)
\left( \psi \left( x,\frac{x}{\varepsilon }\right) -w\left( x,\frac{x}{%
\varepsilon }\right) \right) d\sigma _{\varepsilon }\left( x\right) +\int
\int_{\Omega \times \partial T}u_{0}\left( x,y\right) \left( w\left(
x,y\right) -\psi \left( x,y\right) \right) dxd\sigma \left( y\right) + \\ 
\varepsilon \int_{\partial T^{\varepsilon }}u_{\varepsilon }\left( x\right)
w\left( x,\frac{x}{\varepsilon }\right) d\sigma _{\varepsilon }\left(
x\right) -\int \int_{\Omega \times \partial T}u_{0}\left( x,y\right) w\left(
x,y\right) dxd\sigma \left( y\right) \text{.}%
\end{array}%
\end{equation*}%
Since (\ref{eq2.9a}) holds for $w$, there exists some $\alpha >0$ such $E\ni
\varepsilon \leq \alpha $ implies%
\begin{equation*}
\left\vert \varepsilon \int_{\partial T^{\varepsilon }}u_{\varepsilon
}\left( x\right) w\left( x,\frac{x}{\varepsilon }\right) d\sigma
_{\varepsilon }\left( x\right) -\int \int_{\Omega \times \partial
T}u_{0}\left( x,y\right) w\left( x,y\right) dxd\sigma \left( y\right)
\right\vert \leq \frac{\eta }{3}\text{. }
\end{equation*}%
Moreover, 
\begin{equation*}
\left\vert \varepsilon \int_{\partial T^{\varepsilon }}u_{\varepsilon
}\left( x\right) \left( \psi \left( x,\frac{x}{\varepsilon }\right) -w\left(
x,\frac{x}{\varepsilon }\right) \right) d\sigma _{\varepsilon }\left(
x\right) \right\vert \leq C^{\frac{1}{p}}C_{0}^{\frac{1}{p^{\prime }}%
}\left\Vert \psi -w\right\Vert _{\infty }\leq \frac{\eta }{3}
\end{equation*}%
and 
\begin{equation*}
\left\vert \int \int_{\Omega \times \partial T}u_{0}\left( x,y\right) \left(
w\left( x,y\right) -\psi \left( x,y\right) \right) dxd\sigma \left( y\right)
\right\vert \leq \left\vert \Omega \right\vert ^{\frac{1}{p^{\prime }}%
}\left\vert \partial T\right\vert ^{\frac{1}{p^{\prime }}}\left\Vert
u_{0}\right\Vert _{L^{p}\left( \Omega ;L_{per}^{p}\left( \partial T\right)
\right) }\left\Vert \psi -w\right\Vert _{\infty }
\end{equation*}%
\begin{equation*}
\leq \frac{\eta }{3}\text{.}
\end{equation*}%
Thus $E\ni \varepsilon \leq \alpha $ implies 
\begin{equation*}
\left\vert \varepsilon \int_{\partial T^{\varepsilon }}u_{\varepsilon
}\left( x\right) \psi \left( x,\frac{x}{\varepsilon }\right) d\sigma
_{\varepsilon }\left( x\right) -\int \int_{\Omega \times \partial
T}u_{0}\left( x,y\right) \psi \left( x,y\right) dxd\sigma \left( y\right)
\right\vert \leq \eta \text{.}
\end{equation*}
\end{remark}

Let us state now the following useful proposition which is proved in \cite%
{bib2} .

\begin{proposition}
\label{pr2.2} Let $\left( u_{\varepsilon }\right) _{\varepsilon \in E}$ be a
sequence in $H^{1}\left( \Omega \right) $ such that 
\begin{equation*}
\left\Vert u_{\varepsilon }\right\Vert _{L^{2}\left( \Omega \right)
}+\varepsilon \left\Vert \nabla u_{\varepsilon }\right\Vert _{L^{2}\left(
\Omega \right) }\leq C\text{,}
\end{equation*}%
where $C>0$ is a constant independent of $\varepsilon $. Then the trace of $%
u_{\varepsilon }$ on $\partial T^{\varepsilon }$ satisfies%
\begin{equation*}
\varepsilon \int_{\partial T^{\varepsilon }}\left\vert u_{\varepsilon
}\left( x\right) \right\vert ^{2}d\sigma _{\varepsilon }\left( x\right) \leq
C
\end{equation*}%
for all $\varepsilon \in E$, and up to a subsequence, it two-scale converges
in the sens of Definition \ref{def2.2} to some $u_{0}\in L^{2}\left( \Omega
;L_{per}^{2}\left( \partial T\right) \right) $, which is the trace on $%
\partial T$ of a function in $L^{2}\left( \Omega ;H_{\#}^{1}\left( Y\right)
\right) $. More precisely, there exists a subsequence $E^{\prime }$ of $E$
and a function $u_{0}\in L^{2}\left( \Omega ;H_{per}^{1}\left( Y\right)
\right) $ such that as $E^{\prime }\ni \varepsilon \rightarrow 0$,%
\begin{equation*}
\varepsilon \int_{\partial T^{\varepsilon }}u_{\varepsilon }\left( x\right)
\psi ^{\varepsilon }\left( x\right) d\sigma _{\varepsilon }\left( x\right)
\rightarrow \int \int_{\Omega \times \partial T}u_{0}\left( x,y\right) \psi
\left( x,y\right) dxd\sigma \left( y\right) \text{ \ for all }\psi \in 
\mathcal{C}\left( \overline{\Omega };\mathcal{C}_{per}\left( Y\right)
\right) \text{,}
\end{equation*}%
\begin{equation*}
\int_{\Omega }u_{\varepsilon }\left( x\right) \psi ^{\varepsilon }\left(
x\right) dx\rightarrow \int \int_{\Omega \times Y}u_{0}\left( x,y\right)
\psi \left( x,y\right) dxdy\text{ \ for all }\psi \in L^{2}\left( \Omega ;%
\mathcal{C}_{per}\left( Y\right) \right)
\end{equation*}%
and 
\begin{equation*}
\varepsilon \int_{\Omega }\frac{\partial u_{\varepsilon }}{\partial x_{j}}%
\left( x\right) \psi ^{\varepsilon }\left( x\right) dx\rightarrow \int
\int_{\Omega \times Y}\frac{\partial u_{0}}{\partial y_{j}}\left( x,y\right)
\psi \left( x,y\right) dxdy\text{ \ for all }\psi \in L^{2}\left( \Omega ;%
\mathcal{C}_{per}\left( Y\right) \right) \text{,}
\end{equation*}%
for all $1\leq j\leq N$.
\end{proposition}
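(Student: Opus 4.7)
The plan breaks into three phases: (i) derive the uniform trace bound by a scaling argument, (ii) extract two-scale limits via the abstract compactness theorems, and (iii) identify these limits and prove their mutual compatibility, the last being the main obstacle.

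For phase (i), I would invoke the standard Sobolev trace inequality on the reference cell $Y^{\ast}=Y\setminus T$: for every $w\in H^{1}(Y^{\ast})$,
\[
\int_{\partial T}|w|^{2}\,d\sigma \leq c_{0}\bigl(\|w\|_{L^{2}(Y^{\ast})}^{2}+\|\nabla_{y}w\|_{L^{2}(Y^{\ast})}^{2}\bigr).
\]
Applied to $y\mapsto u_{\varepsilon}(\varepsilon y)$ on each cell $k+Y^{\ast}$ with $k\in t^{\varepsilon}$, and then rescaled back to $\varepsilon(k+Y^{\ast})$ and summed over $k$, it produces
\[
\varepsilon \int_{\partial T^{\varepsilon}}|u_{\varepsilon}|^{2}\,d\sigma_{\varepsilon} \leq c_{0}\bigl(\|u_{\varepsilon}\|_{L^{2}(\Omega)}^{2}+\varepsilon^{2}\|\nabla u_{\varepsilon}\|_{L^{2}(\Omega)}^{2}\bigr)\leq C,
\]
which is the asserted trace bound. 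Theorem \ref{th2.3} then yields a subsequence along which the trace of $u_{\varepsilon}$ on $\partial T^{\varepsilon}$ two-scale converges to some $u_{0}^{s}\in L^{2}(\Omega;L_{per}^{2}(\partial T))$.

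For phase (ii), Theorem \ref{th2.1} applied to $u_{\varepsilon}$ and to each $\varepsilon\,\partial u_{\varepsilon}/\partial x_{j}$ (each bounded in $L^{2}(\Omega)$ by hypothesis) supplies, after passing to a common subsequence, weak $\Sigma$-limits $u_{0}\in L^{2}(\Omega;L_{per}^{2}(Y))$ and $\xi_{j}\in L^{2}(\Omega;L_{per}^{2}(Y))$. For $\psi\in\mathcal{D}(\Omega)\otimes\mathcal{C}_{per}^{\infty}(Y)$, the identity $\varepsilon\,\partial\psi^{\varepsilon}/\partial x_{j}=(\partial\psi/\partial y_{j})^{\varepsilon}+\varepsilon(\partial\psi/\partial x_{j})^{\varepsilon}$ together with integration by parts in $x_{j}$ gives
\[
\varepsilon\int_{\Omega}\frac{\partial u_{\varepsilon}}{\partial x_{j}}\psi^{\varepsilon}\,dx=-\int_{\Omega}u_{\varepsilon}\Bigl(\frac{\partial\psi}{\partial y_{j}}\Bigr)^{\varepsilon}dx-\varepsilon\int_{\Omega}u_{\varepsilon}\Bigl(\frac{\partial\psi}{\partial x_{j}}\Bigr)^{\varepsilon}dx.
\]
Letting $\varepsilon\to 0$ identifies $\xi_{j}=\partial u_{0}/\partial y_{j}$ distributionally, so $u_{0}\in L^{2}(\Omega;H_{per}^{1}(Y))$; a routine subtraction of the $y$-mean (which does not affect the $y$-derivatives) lands the representative in $L^{2}(\Omega;H_{\#}^{1}(Y))$ as required by the statement.

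For phase (iii), the core identification is $u_{0}^{s}=u_{0}|_{\partial T}$. Taking $\phi\in\mathcal{D}(\Omega;\mathcal{C}_{per}^{\infty}(Y;\mathbb{R}^{N}))$ and applying Green's formula on $\Omega^{\varepsilon}$ to $u_{\varepsilon}\phi^{\varepsilon}$ (the contribution from $\partial\Omega$ vanishing by compact support of $\phi$) leaves a single surface integral on $\partial T^{\varepsilon}$. Expanding $\varepsilon\,\mathrm{div}(\phi^{\varepsilon})=(\mathrm{div}_{y}\phi)^{\varepsilon}+\varepsilon(\mathrm{div}_{x}\phi)^{\varepsilon}$ and passing to the limit, the volume side converges, via the bulk two-scale limits and Remark \ref{rem2.2} applied to the $Y$-periodic indicator of $Y^{\ast}$, to $\int \int_{\Omega\times Y^{\ast}}[u_{0}\,\mathrm{div}_{y}\phi+\nabla_{y}u_{0}\cdot\phi]\,dxdy$, while the surface side converges to $-\int \int_{\Omega\times\partial T}u_{0}^{s}\,\phi\cdot\mathbf{n}\,dxd\sigma$. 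A further integration by parts in $y$ on $Y^{\ast}$, the $\partial Y$ contributions cancelling by joint $Y$-periodicity of $\phi$ and $u_{0}$, collapses the volume integral to $-\int \int_{\Omega\times\partial T}u_{0}\,\phi\cdot\mathbf{n}\,dxd\sigma$. Since this equality holds for every such $\phi$, one obtains $u_{0}^{s}=u_{0}|_{\partial T}$, and the three displayed convergences of the proposition follow. The main obstacle is precisely this matching step: one must track two Green's-type identities in parallel---one in $x$ producing the physical surface $\partial T^{\varepsilon}$, one in $y$ producing the unit-cell surface $\partial T$---while ensuring that periodicity eliminates the $\partial Y$ terms and that the $\varepsilon$-order remainders decay in the limit.
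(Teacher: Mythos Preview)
The paper does not prove this proposition; immediately before the statement it says the result ``is proved in \cite{bib2}'' (Allaire--Damlamian--Hornung), and no argument is supplied. Your three-phase outline is precisely the standard proof from that reference: the rescaled trace inequality for phase~(i) (the paper itself reuses exactly this computation later, cf.\ the derivation of (\ref{eq2.13g})), $L^{2}$ two-scale compactness together with the integration-by-parts identification $\xi_{j}=\partial u_{0}/\partial y_{j}$ for phase~(ii), and the Green's-formula matching of bulk and surface limits over $\Omega^{\varepsilon}$ and $Y^{\ast}$ for phase~(iii). The argument is correct and there is nothing to compare it against in the present paper.

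One minor caveat on your handling of the zero-mean condition. Subtracting the $y$-mean of $u_{0}$ does place the result in $L^{2}(\Omega;H_{\#}^{1}(Y))$, but it simultaneously falsifies the second displayed convergence of the proposition: taking $\psi$ independent of $y$ there forces $\int_{Y}u_{0}(x,y)\,dy$ to equal the weak $L^{2}(\Omega)$ limit of $u_{\varepsilon}$, which is in general nonzero. The natural target space is $L^{2}(\Omega;H^{1}_{per}(Y))$ without the mean constraint; the appearance of $H_{\#}^{1}$ in the statement is a harmless slip, since only $\nabla_{y}u_{0}$ and the trace $u_{0}|_{\partial T}$ are ever used downstream.
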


Having made the above preliminaries, let us turn now to the statement of the
hypotheses for the homogenization problem of (\ref{eq1.3})-(\ref{eq1.7}). In
view of (\ref{eq1.8a}) and since the functions $a_{ij}$, $\mathcal{\theta }$
and $f_{j}$ belong to $L^{\infty }\left( \mathbb{R}^{N}\right) $ we have 
\begin{equation}
a_{ij},\mathcal{\theta }\text{ and }f_{j}\in L_{per}^{\infty }\left(
Y\right) \text{\qquad }\left( 1\leq i,j\leq N\right) \text{.}
\label{eq2.10c}
\end{equation}%
Further, since the sets $k+T$ $\left( k\in \mathbb{Z}^{N}\right) $ are
pairwise disjoint, the characteristic function, $\mathcal{\chi }_{\Theta }$,
of the set $\Theta $ ($\Theta $ is defined in (\ref{eq2.6a})) verifies 
\begin{equation*}
\mathcal{\chi }_{\Theta }=\sum_{k\in \mathbb{Z}^{N}}\mathcal{\chi }_{k+T}%
\text{ \quad a.e. in }\mathbb{R}^{N}\text{,}
\end{equation*}%
where $\mathcal{\chi }_{k+T}$ is the characteristic function of $k+T$ in $%
\mathbb{R}_{y}^{N}$. We have the following proposition.

\begin{proposition}
\label{pr2.3} The characteristic function of the set $\Theta $ ($\Theta $ is
given by (\ref{eq2.6a})), $\mathcal{\chi }_{\Theta }$ belongs to $%
L_{per}^{\infty }\left( Y\right) $ and moreover its mean value is 
\begin{equation*}
\int_{Y}\mathcal{\chi }_{\Theta }\left( y\right) dy=\left\vert T\right\vert 
\text{.}
\end{equation*}
\end{proposition}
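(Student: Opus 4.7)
The plan is to establish the two claims in order: first the $Y$-periodicity and $L^{\infty}$-membership of $\chi_\Theta$, and then the mean value computation.

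For periodicity, I would use the defining formula $\Theta = \bigcup_{k\in\mathbb{Z}^N}(k+T)$ directly. For any $m \in \mathbb{Z}^N$, the translate $\Theta + m = \bigcup_{k\in\mathbb{Z}^N}(k+m+T)$ equals $\Theta$ after reindexing $k' = k+m$. Consequently $\chi_\Theta(y+m)=\chi_\Theta(y)$ pointwise on $\mathbb{R}^N$, and since $\chi_\Theta$ takes values in $\{0,1\}$, it belongs to $L^{\infty}_{per}(Y)$ (with $\|\chi_\Theta\|_\infty \le 1$).

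For the mean value, I would write
\begin{equation*}
\int_Y \chi_\Theta(y)\,dy = |Y\cap \Theta| = \Big|Y \cap \bigcup_{k\in\mathbb{Z}^N}(k+T)\Big| = \sum_{k\in\mathbb{Z}^N} |Y\cap(k+T)|,
\end{equation*}
where the last equality uses that the sets $k+T$ are pairwise disjoint (since $T\subset Y$ and the translates $k+Y$ for $k\in\mathbb{Z}^N$ tile $\mathbb{R}^N$ with pairwise intersections of Lebesgue measure zero). The term with $k=0$ gives $|Y\cap T| = |T|$ because $T\subset Y$. For $k\neq 0$, the inclusion $k+T \subset k+Y$ yields $Y\cap (k+T) \subset Y\cap (k+Y)$; the right-hand side is contained in $\partial Y \cup \partial(k+Y)$ and therefore has Lebesgue measure zero. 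Summing gives $|Y\cap \Theta| = |T|$, which is the desired formula.

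No single step is truly difficult here; the one place that requires a slight care is justifying that the contributions from $k\neq 0$ vanish, which comes down to the well-known fact that the half-open cube $Y$ (or its closure modulo boundary) tiles $\mathbb{R}^N$ under $\mathbb{Z}^N$-translation. Everything else is an unfolding of definitions together with the hypothesis $T\subset Y$ from (\ref{eq1.1}).
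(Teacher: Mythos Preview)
Your proof is correct and matches the spirit of what the paper does. Note that the paper actually states Proposition~\ref{pr2.3} without a formal proof; the essential ingredients appear in the surrounding text, namely the decomposition $\chi_\Theta=\sum_{k\in\mathbb{Z}^N}\chi_{k+T}$ (stated just before the proposition) and the identity $\chi_Y\chi_\Theta=\chi_T$ a.e.\ (stated just after, via $\chi_\Theta=\sum_k\chi_{k+\mathring{T}}$). Your argument is the set-theoretic unpacking of exactly these facts.

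One small simplification: your treatment of the $k\neq 0$ terms is slightly roundabout. Since $Y=(-\tfrac12,\tfrac12)^N$ is open and the translates $k+Y$ for distinct $k\in\mathbb{Z}^N$ are pairwise \emph{disjoint} (not merely overlapping on boundaries), you have directly $Y\cap(k+Y)=\varnothing$ for $k\neq 0$, hence $Y\cap(k+T)=\varnothing$. There is no need to invoke boundaries or measure-zero intersections.
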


Now, we consider the open set 
\begin{equation*}
G=\mathbb{R}_{y}^{N}\backslash \Theta
\end{equation*}%
and its characteristic function $\mathcal{\chi }_{G}$. We have:%
\begin{equation}
\mathcal{\chi }_{G}\in L_{per}^{\infty }\left( Y\right) \text{.}
\label{eq2.10a}
\end{equation}%
Indeed, $\mathcal{\chi }_{G}=1-\mathcal{\chi }_{\Theta }$ and $\mathcal{\chi 
}_{\Theta }\in L_{per}^{\infty }\left( Y\right) $ (in view of Proposition %
\ref{pr2.3}). Further, as $\int_{Y}\mathcal{\chi }_{\Theta }\left( y\right)
dy=\left\vert T\right\vert $ we have 
\begin{equation}
\int_{Y}\mathcal{\chi }_{G}\left( y\right) dy=\left\vert Y\right\vert
-\left\vert T\right\vert =\left\vert Y^{\ast }\right\vert \text{.}
\label{eq2.10b}
\end{equation}%
Moreover, let us notice that 
\begin{equation*}
\mathcal{\chi }_{\Theta }=\sum_{k\in \mathbb{Z}^{N}}\mathcal{\chi }_{k+%
\overset{\circ }{T}}
\end{equation*}%
a.e. in $\mathbb{R}^{N}$ ( $\overset{\circ }{T}$\ being the interior of $T$)
and Therefore $\mathcal{\chi }_{Y}\mathcal{\chi }_{\Theta }=\mathcal{\chi }_{%
\overset{\circ }{T}}=\mathcal{\chi }_{T}$ a.e. in $\mathbb{R}^{N}$. Thus, we
have 
\begin{equation}
\mathcal{\chi }_{Y}\mathcal{\chi }_{G}=\mathcal{\chi }_{Y^{\ast }}
\label{eq2.10d}
\end{equation}%
a.e. in $\mathbb{R}^{N}$.

Let us state the following useful lemma.

\begin{lemma}
\label{lem2.4} Let $E$ be a fundamental sequence. Let $\left( u_{\varepsilon
}\right) _{\varepsilon \in E}\subset L^{2}\left( \Omega \right) $ and $%
\left( v_{\varepsilon }\right) _{\varepsilon \in E}\subset L^{\infty }\left(
\Omega \right) $ be two sequences such that:

(i) $u_{\varepsilon }\rightarrow u_{0}$ in $L^{2}\left( \Omega \right) $%
-weak $\Sigma $ as $E\ni \varepsilon \rightarrow 0$,

(ii) $v_{\varepsilon }\rightarrow v_{0}$ in $L^{2}\left( \Omega \right) $%
-strong $\Sigma $ as $E\ni \varepsilon \rightarrow 0$,

(iii) $\left( v_{\varepsilon }\right) _{\varepsilon \in E}$ is bounded in $%
L^{\infty }\left( \Omega \right) $.

Then $u_{\varepsilon }v_{\varepsilon }\rightarrow u_{0}v_{0}$ in $%
L^{2}\left( \Omega \right) $-weak $\Sigma $ as $E\ni \varepsilon \rightarrow
0$.
\end{lemma}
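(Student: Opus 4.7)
The goal is to show that $\int_{\Omega} u_{\varepsilon}v_{\varepsilon}\psi^{\varepsilon}\,dx \to \iint_{\Omega\times Y} u_{0}v_{0}\psi\,dx\,dy$ for every $\psi\in L^{2}(\Omega;\mathcal{C}_{per}(Y))$. My plan is the classical "weak $\times$ strong" splitting argument adapted to the $\Sigma$-setting. First I would make some preliminary reductions. The $L^{\infty}$ bound (iii) together with the strong $\Sigma$-convergence (ii) forces $v_{0}\in L^{\infty}(\Omega\times Y)$ (test $v_{\varepsilon}$ against $L^{1}$-like functions and pass to the limit); hence $u_{0}v_{0}\in L^{2}(\Omega\times Y)$ is well-defined. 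Moreover $(u_{\varepsilon}v_{\varepsilon})_{\varepsilon\in E}$ is bounded in $L^{2}(\Omega)$ since $(u_{\varepsilon})$ is bounded in $L^{2}(\Omega)$ (weak $\Sigma$-convergence). By this uniform bound and the estimate $\|\psi^{\varepsilon}\|_{L^{2}(\Omega)}\le\|\psi\|_{L^{2}(\Omega;\mathcal{C}_{per}(Y))}$, a density argument reduces the problem to test functions $\psi\in\mathcal{C}(\overline{\Omega};\mathcal{C}_{per}(Y))$.

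So fix such a $\psi$ and $\eta>0$. Using strong $\Sigma$-convergence together with the density of $\mathcal{C}(\overline{\Omega};\mathcal{C}_{per}(Y))$ in $L^{2}(\Omega;\mathcal{C}_{per}(Y))$, I would pick $w\in\mathcal{C}(\overline{\Omega};\mathcal{C}_{per}(Y))$ and $\alpha>0$ with
\[
\|v_{0}-w\|_{L^{2}(\Omega\times Y)}\le\tfrac{\eta}{2},\qquad \|v_{\varepsilon}-w^{\varepsilon}\|_{L^{2}(\Omega)}\le\eta\quad(E\ni\varepsilon\le\alpha).
\]
The key splitting is
\[
\int_{\Omega}u_{\varepsilon}v_{\varepsilon}\psi^{\varepsilon}dx=\int_{\Omega}u_{\varepsilon}w^{\varepsilon}\psi^{\varepsilon}dx+\int_{\Omega}u_{\varepsilon}(v_{\varepsilon}-w^{\varepsilon})\psi^{\varepsilon}dx.
\]
Since $w\psi\in\mathcal{C}(\overline{\Omega};\mathcal{C}_{per}(Y))$ is an admissible test function for the weak $\Sigma$-convergence of $u_{\varepsilon}$, the first integral converges to $\iint u_{0}w\psi\,dx\,dy$. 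For the second, Cauchy--Schwarz gives
\[
\Bigl|\int_{\Omega}u_{\varepsilon}(v_{\varepsilon}-w^{\varepsilon})\psi^{\varepsilon}dx\Bigr|\le\|u_{\varepsilon}\|_{L^{2}(\Omega)}\|v_{\varepsilon}-w^{\varepsilon}\|_{L^{2}(\Omega)}\|\psi\|_{\infty}\le C\eta\|\psi\|_{\infty}.
\]

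Finally, the target is close to the limit of the first integral:
\[
\Bigl|\iint u_{0}w\psi-\iint u_{0}v_{0}\psi\Bigr|\le\|u_{0}\|_{L^{2}(\Omega\times Y)}\|w-v_{0}\|_{L^{2}(\Omega\times Y)}\|\psi\|_{\infty}\le\tfrac{\eta}{2}C'\|\psi\|_{\infty}.
\]
Combining these three estimates and taking $\limsup$ as $E\ni\varepsilon\to 0$ followed by $\eta\to 0$ finishes the argument for $\psi\in\mathcal{C}(\overline{\Omega};\mathcal{C}_{per}(Y))$; the density reduction from Step 1 then extends the conclusion to all $\psi\in L^{2}(\Omega;\mathcal{C}_{per}(Y))$.

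There is no deep obstacle; the only delicate point is ensuring the approximant $w$ can be chosen continuous (hence bounded) so that $w\psi$ is a legitimate test function and so that the $\|w\|_{\infty}$ does not appear in the error bounds. This is handled by composing strong $\Sigma$-approximation with density of $\mathcal{C}(\overline{\Omega};\mathcal{C}_{per}(Y))$ in $L^{2}(\Omega;\mathcal{C}_{per}(Y))$, which is the one technical point I would state carefully.
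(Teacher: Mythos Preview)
Your argument is correct and self-contained: you establish the product convergence directly from the definition of strong $\Sigma$-convergence via the classical ``weak $\times$ strong'' splitting, with the only subtlety (choosing the approximant $w$ in $\mathcal{C}(\overline{\Omega};\mathcal{C}_{per}(Y))$ so that $w\psi$ is an admissible test function) handled exactly as you indicate, by combining density with Definition~\ref{def2.1}(ii).

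The paper, however, takes a different and more indirect route. It first invokes an external result, \cite[Proposition~4.7]{bib10}, which already gives $u_{\varepsilon}v_{\varepsilon}\to u_{0}v_{0}$ in $L^{1}(\Omega)$-weak~$\Sigma$ under hypotheses (i)--(iii). It then observes that $(u_{\varepsilon}v_{\varepsilon})$ is bounded in $L^{2}(\Omega)$, extracts via Theorem~\ref{th2.1} a subsequence converging in $L^{2}(\Omega)$-weak~$\Sigma$ to some $z_{0}$, and notes that $L^{2}$-weak~$\Sigma$ convergence implies $L^{1}$-weak~$\Sigma$ convergence (since $L^{\infty}(\Omega;\mathcal{C}_{per}(Y))\subset L^{2}(\Omega;\mathcal{C}_{per}(Y))$); uniqueness of the $L^{1}$-limit then forces $z_{0}=u_{0}v_{0}$, and since the limit is uniquely determined the whole sequence converges. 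Your approach has the advantage of being elementary and not depending on an outside citation (in effect your splitting is how one would prove the cited proposition in the first place). The paper's ``compactness plus identification of the limit'' pattern is shorter once the $L^{1}$ product result is taken for granted and avoids writing out the $\eta$-approximation explicitly.
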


\begin{proof}
By \cite[Proposition 4.7]{bib11} we see that $u_{\varepsilon }v_{\varepsilon
}\rightarrow u_{0}v_{0}$ in $L^{1}\left( \Omega \right) $-weak $\Sigma $ as $%
E\ni \varepsilon \rightarrow 0$. On the other hand, (i) implies that $\left(
u_{\varepsilon }\right) _{\varepsilon \in E}$ is bounded in $L^{2}\left(
\Omega \right) $. Combining this with (iii) we see that $\left(
u_{\varepsilon }v_{\varepsilon }\right) _{\varepsilon \in E}$ is bounded in $%
L^{2}\left( \Omega \right) $. Hence, by Theorem \ref{th2.1}, we can extract
a subsequence $E^{\prime }$ from $E$ such that $u_{\varepsilon
}v_{\varepsilon }\rightarrow z_{0}$ in $L^{2}\left( \Omega \right) $-weak $%
\Sigma $ as $E^{\prime }\ni \rightarrow 0$, where $z_{0}\in L^{2}\left(
\Omega ;L_{per}^{2}\left( Y\right) \right) $, of course. But on one hand, $%
z_{0}\in L^{1}\left( \Omega ;L_{per}^{1}\left( Y\right) \right) $, on the
other hand, $L^{\infty }\left( \Omega ;\mathcal{C}_{per}\left( Y\right)
\right) \subset L^{2}\left( \Omega ;\mathcal{C}_{per}\left( Y\right) \right) 
$ (we recall that $\Omega $ is bounded). Therefore, according to the
definition of the weak $\Sigma $-convergence in $L^{2}\left( \Omega \right) $%
, it follows that as $E^{\prime }\ni \varepsilon \rightarrow 0$, 
\begin{equation*}
\int_{\Omega }u_{\varepsilon }v_{\varepsilon }\psi ^{\varepsilon
}dx\rightarrow \int \int_{\Omega \times Y}z_{0}\left( x,y\right) \psi \left(
x,y\right) dxdy\qquad \left( \psi \in L^{\infty }\left( \Omega ;\mathcal{C}%
_{per}\left( Y\right) \right) \right)
\end{equation*}%
where $\psi ^{\varepsilon }\left( x\right) =\psi \left( x,\frac{x}{%
\varepsilon }\right) $ \ $\left( x\in \Omega \right) $. This means precisely
that the sequence $\left( u_{\varepsilon }v_{\varepsilon }\right)
_{\varepsilon \in E^{\prime }}$ weakly $\Sigma $-converges in $L^{1}\left(
\Omega \right) $ to $z_{0}$. Hence $z_{0}=u_{0}v_{0}$ (by the unicity of the 
$\Sigma $-limit) and further it is the whole sequence $\left( u_{\varepsilon
}v_{\varepsilon }\right) _{\varepsilon \in E}$ (and not only the extracted
subsequence $\left( u_{\varepsilon }v_{\varepsilon }\right) _{\varepsilon
\in E^{\prime }}$) that weakly $\Sigma $-converges in $L^{2}\left( \Omega
\right) $ to $u_{0}v_{0}$.
\end{proof}

\section{A convergence result for (\protect\ref{eq1.3})-(\protect\ref{eq1.7})%
}

Our aim in the present section is to investigate the asymptotic behaviour,
as $\varepsilon \rightarrow 0$, of $\left( \mathbf{u}_{\varepsilon
},p_{\varepsilon }\right) $ solution to (\ref{eq1.3})-(\ref{eq1.7}). To this
end, let us state some preliminaries.

We have the following proposition on the estimates of solutions to (\ref%
{eq1.3})-(\ref{eq1.7}).

\begin{proposition}
\label{pr3.1} Suppose that (\ref{eq1.2a})-(\ref{eq1.2}) are verified. For $%
0<\varepsilon <1$, let $\left( \mathbf{u}_{\varepsilon },p_{\varepsilon
}\right) $ be the unique solution to (\ref{eq1.3})-(\ref{eq1.7}) and $%
\mathcal{P}^{\varepsilon }$ the extension operator in Proposition \ref{pr2.1}%
. There exists a constant $C>0$ independent of $\varepsilon $ such that 
\begin{equation}
\left\Vert \mathcal{P}^{\varepsilon }\mathbf{u}_{\varepsilon }\right\Vert
_{H_{0}^{1}\left( \Omega \right) ^{N}}\leq C\text{,}  \label{eq2.12a}
\end{equation}%
\begin{equation}
\varepsilon \sum_{k=1}^{N}\int_{\partial T^{\varepsilon }}\left\vert
u_{\varepsilon }^{k}\left( x\right) \right\vert ^{2}d\sigma _{\varepsilon
}\left( x\right) \leq C  \label{eq2.12b}
\end{equation}%
and 
\begin{equation}
\left\Vert p_{\varepsilon }\right\Vert _{L^{2}\left( \Omega ^{\varepsilon
}\right) }\leq C\text{.}  \label{eq2.12c}
\end{equation}
\end{proposition}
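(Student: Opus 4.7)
My plan is to obtain \eqref{eq2.12a} and \eqref{eq2.12b} jointly from the variational formulation \eqref{eq1.8} with $\mathbf{v}=\mathbf{u}_{\varepsilon}$, and to derive the pressure estimate \eqref{eq2.12c} afterwards by duality combined with a Bogovskii-type right inverse of the divergence on $\Omega^{\varepsilon}$ with $\varepsilon$-independent norm, built cell-by-cell as in Proposition \ref{pr2.1}.

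\textbf{Step 1 (Energy identity and coercivity).} Plugging $\mathbf{v}=\mathbf{u}_{\varepsilon}$ into \eqref{eq1.8} gives
\begin{equation*}
\sum_{i,j,k=1}^{N}\int_{\Omega^{\varepsilon}}a_{ij}^{\varepsilon}\tfrac{\partial u_{\varepsilon}^{k}}{\partial x_{j}}\tfrac{\partial u_{\varepsilon}^{k}}{\partial x_{i}}dx+\varepsilon\int_{\partial T^{\varepsilon}}\theta^{\varepsilon}|\mathbf{u}_{\varepsilon}|^{2}d\sigma_{\varepsilon}=\int_{\Omega^{\varepsilon}}\mathbf{f}^{\varepsilon}\cdot\mathbf{u}_{\varepsilon}dx.
\end{equation*}
By the ellipticity \eqref{eq1.2} applied component by component and the lower bound $\theta\geq\alpha_{0}$ from \eqref{eq1.2a}, the left-hand side is at least
$\alpha\|\nabla\mathbf{u}_{\varepsilon}\|_{L^{2}(\Omega^{\varepsilon})}^{2}+\alpha_{0}\varepsilon\sum_{k}\int_{\partial T^{\varepsilon}}|u_{\varepsilon}^{k}|^{2}d\sigma_{\varepsilon}$.

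\textbf{Step 2 (Poincaré via the extension).} Using Proposition \ref{pr2.1}, set $\tilde{\mathbf{u}}_{\varepsilon}=\mathcal{P}_{\varepsilon}\mathbf{u}_{\varepsilon}\in\mathbf{V}\subset H_{0}^{1}(\Omega)^{N}$. Since $\mathbf{f}\in L^{\infty}(\mathbb{R}^{N})^{N}$ is $Y$-periodic, $\|\mathbf{f}^{\varepsilon}\|_{L^{2}(\Omega)}\leq|\Omega|^{1/2}\|\mathbf{f}\|_{\infty}$. Applying Cauchy--Schwarz to the right-hand side, then Poincaré in $H_{0}^{1}(\Omega)^{N}$ followed by \eqref{eq2.2}--\eqref{eq2.3}, I get
\begin{equation*}
\Big|\int_{\Omega^{\varepsilon}}\mathbf{f}^{\varepsilon}\cdot\mathbf{u}_{\varepsilon}dx\Big|\leq\|\mathbf{f}^{\varepsilon}\|_{L^{2}(\Omega)}\|\tilde{\mathbf{u}}_{\varepsilon}\|_{L^{2}(\Omega)}\leq C_{1}\|\nabla\tilde{\mathbf{u}}_{\varepsilon}\|_{L^{2}(\Omega)}\leq C_{1}c\|\nabla\mathbf{u}_{\varepsilon}\|_{L^{2}(\Omega^{\varepsilon})}.
\end{equation*}
Combining with Step 1 and absorbing one factor of $\|\nabla\mathbf{u}_{\varepsilon}\|_{L^{2}(\Omega^{\varepsilon})}$ yields a uniform bound on $\|\nabla\mathbf{u}_{\varepsilon}\|_{L^{2}(\Omega^{\varepsilon})}$ and on $\varepsilon\int_{\partial T^{\varepsilon}}|\mathbf{u}_{\varepsilon}|^{2}d\sigma_{\varepsilon}$. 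Re-using \eqref{eq2.3} and Poincaré on $\tilde{\mathbf{u}}_{\varepsilon}$ delivers \eqref{eq2.12a}; the boundary term gives \eqref{eq2.12b}.

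\textbf{Step 3 (Pressure estimate, the hard part).} This is where I expect the real work. First I recover the PDE formulation valid on test functions that are \emph{not} divergence-free: for every $\mathbf{v}\in H^{1}(\Omega^{\varepsilon};\mathbb{R})^{N}$ vanishing on $\partial\Omega$ and satisfying $\mathbf{v}\cdot\mathbf{n}=0$ on $\partial T^{\varepsilon}$, integration by parts in \eqref{eq1.3}--\eqref{eq1.7} gives
\begin{equation*}
\mathbf{a}_{\varepsilon}(\mathbf{u}_{\varepsilon},\mathbf{v})-\int_{\Omega^{\varepsilon}}p_{\varepsilon}\mathop{\mathrm{div}}\mathbf{v}\,dx=\int_{\Omega^{\varepsilon}}\mathbf{f}^{\varepsilon}\cdot\mathbf{v}\,dx,
\end{equation*}
the $p_{\varepsilon}\mathbf{n}$ boundary contribution from \eqref{eq1.7} being annihilated by $\mathbf{v}\cdot\mathbf{n}=0$. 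To control $p_{\varepsilon}$, I would build, by a unit-cell argument analogous to Lemma \ref{lem2.1} (and in the same spirit as Tartar's restriction operator), a bounded linear lifting
\begin{equation*}
B_{\varepsilon}:L^{2}(\Omega^{\varepsilon};\mathbb{R})/\mathbb{R}\longrightarrow H_{0}^{1}(\Omega^{\varepsilon};\mathbb{R})^{N},\qquad\mathop{\mathrm{div}}(B_{\varepsilon}q)=q,
\end{equation*}
with $\|B_{\varepsilon}q\|_{H_{0}^{1}(\Omega^{\varepsilon})^{N}}\leq C\|q\|_{L^{2}(\Omega^{\varepsilon})}$ and $C$ independent of $\varepsilon$: on each inclusion $\varepsilon(k+Y^{\ast})$ with $k\in t^{\varepsilon}$ one solves the divergence equation on the reference cell $Y^{\ast}$ with zero boundary data (adjusting for mean value by a correction that is itself divergence-free), then rescales; on cells meeting $\partial\Omega$ the standard Bogovskii operator in $\Omega$ is used. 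Taking $\mathbf{v}=B_{\varepsilon}p_{\varepsilon}$ in the identity above, the divergence term becomes $\|p_{\varepsilon}\|_{L^{2}(\Omega^{\varepsilon})}^{2}$, while the other two terms are bounded, thanks to Step 1 and the uniform bound on $B_{\varepsilon}$, by $C\|p_{\varepsilon}\|_{L^{2}(\Omega^{\varepsilon})}$. Dividing by $\|p_{\varepsilon}\|_{L^{2}(\Omega^{\varepsilon})}$ yields \eqref{eq2.12c}.

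The routine estimates \eqref{eq2.12a}--\eqref{eq2.12b} should pose no difficulty; the principal obstacle is the $\varepsilon$-uniform construction of $B_{\varepsilon}$ (equivalently, the uniform inf-sup constant for $\mathop{\mathrm{div}}:H_{0}^{1}(\Omega^{\varepsilon})^{N}\to L_{0}^{2}(\Omega^{\varepsilon})$), which is the Stokes-counterpart of Proposition \ref{pr2.1} and is the only non-standard ingredient in the proof.
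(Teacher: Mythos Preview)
Your Steps~1--2 are exactly the paper's argument: test \eqref{eq1.8} with $\mathbf{v}=\mathbf{u}_{\varepsilon}$, use ellipticity and the lower bound on $\theta$, and then push the right-hand side through $\mathcal{P}^{\varepsilon}$ and Poincar\'e on the fixed domain $\Omega$ to close the estimates \eqref{eq2.12a}--\eqref{eq2.12b}.

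For the pressure, however, the paper takes a shorter route than the one you outline. Rather than building an $\varepsilon$-uniform Bogovskii operator on the \emph{perforated} domain $\Omega^{\varepsilon}$, the paper works on the \emph{fixed} domain $\Omega$: extend $p_{\varepsilon}$ by zero to $\widetilde{p}_{\varepsilon}\in L^{2}(\Omega)/\mathbb{R}$ and invoke the classical result (Tartar \cite{bib15}) producing $\mathbf{v}_{\varepsilon}\in H_{0}^{1}(\Omega)^{N}$ with $\mathrm{div}\,\mathbf{v}_{\varepsilon}=\widetilde{p}_{\varepsilon}$ and $\|\mathbf{v}_{\varepsilon}\|_{H^{1}(\Omega)^{N}}\le c_{1}\|\widetilde{p}_{\varepsilon}\|_{L^{2}(\Omega)}$, where $c_{1}$ depends only on $\Omega$. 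This $\mathbf{v}_{\varepsilon}$ is \emph{not} tangential on $\partial T^{\varepsilon}$, so testing \eqref{eq1.3} with it leaves a boundary term; but thanks to the Robin condition \eqref{eq1.7} one has $p_{\varepsilon}\mathbf{n}-\partial\mathbf{u}_{\varepsilon}/\partial\mathbf{n}_{P^{\varepsilon}}=\varepsilon\theta^{\varepsilon}\mathbf{u}_{\varepsilon}$, so the boundary contribution is exactly $\varepsilon\int_{\partial T^{\varepsilon}}\theta^{\varepsilon}\mathbf{u}_{\varepsilon}\cdot\mathbf{v}_{\varepsilon}\,d\sigma_{\varepsilon}$ and the identity reads $\|p_{\varepsilon}\|_{L^{2}(\Omega^{\varepsilon})}^{2}=\mathbf{a}_{\varepsilon}(\mathbf{u}_{\varepsilon},\mathbf{v}_{\varepsilon})-\int_{\Omega^{\varepsilon}}\mathbf{f}^{\varepsilon}\cdot\mathbf{v}_{\varepsilon}\,dx$. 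The only new ingredient is then a rescaled trace inequality,
\[
\varepsilon\int_{\partial T^{\varepsilon}}|\mathbf{v}_{\varepsilon}|^{2}\,d\sigma_{\varepsilon}\;\le\; c_{2}\Big(\|\mathbf{v}_{\varepsilon}\|_{L^{2}(\Omega)}^{2}+\varepsilon^{2}\|\nabla\mathbf{v}_{\varepsilon}\|_{L^{2}(\Omega)}^{2}\Big)\;\le\; c_{2}\|\mathbf{v}_{\varepsilon}\|_{H^{1}(\Omega)^{N}}^{2},
\]
obtained by scaling the trace inequality on $Y^{\ast}$ and summing over cells. This, together with \eqref{eq2.12a}--\eqref{eq2.12b}, bounds $\mathbf{a}_{\varepsilon}(\mathbf{u}_{\varepsilon},\mathbf{v}_{\varepsilon})$ and the forcing term by $C\|p_{\varepsilon}\|_{L^{2}(\Omega^{\varepsilon})}$.

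In short, the paper sidesteps precisely the construction you identify as ``the principal obstacle'': no uniform inf--sup on $\Omega^{\varepsilon}$ is needed, because the Robin boundary condition \eqref{eq1.7} makes the extra boundary term controllable by a simple trace estimate. Your approach would also work (uniform Bogovskii/restriction operators on periodically perforated domains are known), but it imports substantially heavier machinery than the problem requires.
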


\begin{proof}
We turn back to the variational probem (\ref{eq1.8}), we take in particular $%
\mathbf{v}=\mathbf{u}_{\varepsilon }$ in the equation. In virtue of (\ref%
{eq1.2a})-(\ref{eq1.2}), this leads to 
\begin{equation}
\alpha \left\Vert \nabla \mathbf{u}_{\varepsilon }\right\Vert _{L^{2}\left(
\Omega ^{\varepsilon }\right) }^{2}+\varepsilon \alpha
_{0}\sum_{k=1}^{N}\int_{\partial T^{\varepsilon }}\left\vert u_{\varepsilon
}^{k}\left( x\right) \right\vert ^{2}d\sigma _{\varepsilon }\left( x\right)
\leq \left\Vert \mathbf{u}_{\varepsilon }\right\Vert _{L^{2}\left( \Omega
^{\varepsilon }\right) }\left\Vert \mathbf{f}^{\varepsilon }\right\Vert
_{L^{2}\left( \Omega \right) }\text{.}  \label{eq2.13a}
\end{equation}%
But, by Proposition \ref{pr2.1} we have 
\begin{equation*}
\alpha \left\Vert \nabla \mathbf{u}_{\varepsilon }\right\Vert _{L^{2}\left(
\Omega ^{\varepsilon }\right) }^{2}\geq \frac{\alpha }{c}\left\Vert \nabla 
\mathcal{P}^{\varepsilon }\mathbf{u}_{\varepsilon }\right\Vert _{L^{2}\left(
\Omega \right) }^{2}\geq \frac{\alpha }{c^{\prime }c}\left\Vert \mathcal{P}%
^{\varepsilon }\mathbf{u}_{\varepsilon }\right\Vert _{L^{2}\left( \Omega
\right) }^{2}\text{,}
\end{equation*}%
where $c$ is the constant in (\ref{eq2.3}) and $c^{\prime }$ the one in the
Poincar\'{e}'s inequality. Thus, using (\ref{eq2.13a}), one has 
\begin{equation*}
\frac{\alpha }{c^{\prime }c}\left\Vert \mathcal{P}^{\varepsilon }\mathbf{u}%
_{\varepsilon }\right\Vert _{L^{2}\left( \Omega \right) }^{2}\leq \lambda
\left( \Omega \right) ^{\frac{1}{2}}\left\Vert \mathbf{f}\right\Vert
_{\infty }\left\Vert \mathcal{P}^{\varepsilon }\mathbf{u}_{\varepsilon
}\right\Vert _{L^{2}\left( \Omega \right) }\text{.}
\end{equation*}%
It follows by (\ref{eq2.13a}), Proposition \ref{pr2.1} and the preceding
inequality that%
\begin{equation}
\left\Vert \mathcal{P}^{\varepsilon }\mathbf{u}_{\varepsilon }\right\Vert
_{L^{2}\left( \Omega \right) }\leq \frac{c^{\prime }c}{\alpha }\lambda
\left( \Omega \right) ^{\frac{1}{2}}\left\Vert \mathbf{f}\right\Vert
_{\infty }\text{, }\left\Vert \nabla \mathcal{P}^{\varepsilon }\mathbf{u}%
_{\varepsilon }\right\Vert _{L^{2}\left( \Omega \right) }\leq \frac{c\sqrt{%
c^{\prime }}}{\alpha }\lambda \left( \Omega \right) ^{\frac{1}{2}}\left\Vert 
\mathbf{f}\right\Vert _{\infty }\text{ }  \label{eq2.13d}
\end{equation}%
and 
\begin{equation}
\varepsilon \sum_{k=1}^{N}\int_{\partial T^{\varepsilon }}\left\vert
u_{\varepsilon }^{k}\left( x\right) \right\vert ^{2}d\sigma _{\varepsilon
}\left( x\right) \leq \frac{c^{\prime }c}{\alpha _{0}\alpha }\lambda \left(
\Omega \right) \left\Vert \mathbf{f}\right\Vert _{\infty }^{2}\text{.}
\label{eq2.13e}
\end{equation}%
Furthermore $p_{\varepsilon }\in L^{2}\left( \Omega ^{\varepsilon };\mathbb{R%
}\right) \mathfrak{/}\mathbb{R}$ and the extension by zero of $%
p_{\varepsilon }$ in the whole $\Omega $, denoted by $\widetilde{p}%
_{\varepsilon }$ belongs to $L^{2}\left( \Omega ;\mathbb{R}\right) \mathfrak{%
/}\mathbb{R}$. Consequently, in virtue of \cite[p. 30]{bib16} there exists
some $\mathbf{v}_{\varepsilon }\in $ $H_{0}^{1}\left( \Omega ;\mathbb{R}%
\right) ^{N}$ such that%
\begin{equation}
\left\{ 
\begin{array}{c}
div\mathbf{v}_{\varepsilon }=\widetilde{p}_{\varepsilon }\text{,} \\ 
\left\Vert \mathbf{v}_{\varepsilon }\right\Vert _{H^{1}\left( \Omega \right)
^{N}}\leq c_{1}\left\Vert \widetilde{p}_{\varepsilon }\right\Vert
_{L^{2}\left( \Omega \right) }%
\end{array}%
\right.  \label{eq2.13c}
\end{equation}%
where the constant $c_{1}$ depends solely on $\Omega $. Multiplying (\ref%
{eq1.3}) by $\mathbf{v}_{\varepsilon }$, we obtain 
\begin{equation*}
\sum_{i,j,k=1}^{N}\int_{\Omega ^{\varepsilon }}a_{ij}^{\varepsilon }\frac{%
\partial u_{\varepsilon }^{k}}{\partial x_{j}}\frac{\partial v_{\varepsilon
}^{k}}{\partial x_{i}}dx+\int_{\partial T^{\varepsilon }}\left(
p_{\varepsilon }\mathbf{n}-\frac{\partial \mathbf{u}_{\varepsilon }}{%
\partial \mathbf{n}_{P^{\varepsilon }}}\right) {\small \cdot }\mathbf{v}%
_{\varepsilon }d\sigma _{\varepsilon }-\left\Vert p_{\varepsilon
}\right\Vert _{L^{2}\left( \Omega ^{\varepsilon }\right) }^{2}=\int_{\Omega
^{\varepsilon }}\mathbf{f}^{\varepsilon }{\small \cdot }\mathbf{v}%
_{\varepsilon }dx\text{.}
\end{equation*}%
Thus, by (\ref{eq1.7}) and the preceding equality, one has 
\begin{equation}
\left\Vert p_{\varepsilon }\right\Vert _{L^{2}\left( \Omega ^{\varepsilon
}\right) }^{2}=\mathbf{a}_{\varepsilon }\left( \mathbf{u}_{\varepsilon },%
\mathbf{v}_{\varepsilon }\right) -\int_{\Omega ^{\varepsilon }}\mathbf{f}%
^{\varepsilon }{\small \cdot }\mathbf{v}_{\varepsilon }dx\text{.}
\label{eq2.13b}
\end{equation}%
Further, on one hand we 
\begin{equation*}
\left\vert \mathbf{a}_{\varepsilon }\left( \mathbf{u}_{\varepsilon },\mathbf{%
v}_{\varepsilon }\right) \right\vert \leq c^{\prime \prime }\left\Vert
\nabla \mathbf{u}_{\varepsilon }\right\Vert _{L^{2}\left( \Omega
^{\varepsilon }\right) }\left\Vert \nabla \mathbf{v}_{\varepsilon
}\right\Vert _{L^{2}\left( \Omega ^{\varepsilon }\right) }
\end{equation*}%
\begin{equation*}
+c^{\prime \prime \prime }\left( \varepsilon \sum_{k=1}^{N}\int_{\partial
T^{\varepsilon }}\left\vert u_{\varepsilon }^{k}\left( x\right) \right\vert
^{2}d\sigma _{\varepsilon }\left( x\right) \right) ^{\frac{1}{2}}\left(
\varepsilon \sum_{k=1}^{N}\int_{\partial T^{\varepsilon }}\left\vert
v_{\varepsilon }^{k}\left( x\right) \right\vert ^{2}d\sigma _{\varepsilon
}\left( x\right) \right) ^{\frac{1}{2}}\text{,}
\end{equation*}%
where $c^{\prime \prime }=N^{2}\max_{1\leq i,j\leq N}\left\Vert
a_{ij}\right\Vert _{\infty }$ and $c^{\prime \prime \prime }=\left\Vert 
\mathcal{\theta }\right\Vert _{\infty }$. Thus, using (\ref{eq2.13d})-(\ref%
{eq2.13c}), one has 
\begin{equation}
\left\vert \mathbf{a}_{\varepsilon }\left( \mathbf{u}_{\varepsilon },\mathbf{%
v}_{\varepsilon }\right) \right\vert \leq c^{\prime \prime }c_{1}\frac{\sqrt{%
c^{\prime }c}}{\alpha }\lambda \left( \Omega \right) ^{\frac{1}{2}%
}\left\Vert \mathbf{f}\right\Vert _{\infty }\left\Vert p_{\varepsilon
}\right\Vert _{L^{2}\left( \Omega ^{\varepsilon }\right) }+c^{\prime \prime
\prime }\sqrt{\frac{c^{\prime }c}{\alpha _{0}\alpha }}\lambda \left( \Omega
\right) ^{\frac{1}{2}}\left\Vert \mathbf{f}\right\Vert _{\infty }\left(
\varepsilon \sum_{k=1}^{N}\int_{\partial T^{\varepsilon }}\left\vert
v_{\varepsilon }^{k}\left( x\right) \right\vert ^{2}d\sigma _{\varepsilon
}\left( x\right) \right) ^{\frac{1}{2}}\text{.}  \label{eq2.13f}
\end{equation}%
Moreover, by rescaling and summation over the cells $\varepsilon \left(
l+Y^{\ast }\right) \cap \Omega $, with $l\in t^{\varepsilon }$, the trace
inequality in the unit cell yields 
\begin{equation*}
\varepsilon \sum_{k=1}^{N}\int_{\partial T^{\varepsilon }}\left\vert
v_{\varepsilon }^{k}\left( x\right) \right\vert ^{2}d\sigma _{\varepsilon
}\left( x\right) \leq c_{2}\left( \sum_{k=1}^{N}\int_{\Omega ^{\varepsilon
}}\left\vert v_{\varepsilon }^{k}\left( x\right) \right\vert
^{2}dx+\varepsilon ^{2}\sum_{k=1}^{N}\int_{\Omega ^{\varepsilon }}\left\vert
\nabla v_{\varepsilon }^{k}\left( x\right) \right\vert ^{2}dx\right) \text{,}
\end{equation*}%
where $c_{2}>0$ is a constant depending solely on $Y^{\ast }$. Thus, for all 
$0<\varepsilon <1$,%
\begin{equation}
\varepsilon \sum_{k=1}^{N}\int_{\partial T^{\varepsilon }}\left\vert
v_{\varepsilon }^{k}\left( x\right) \right\vert ^{2}d\sigma _{\varepsilon
}\left( x\right) \leq c_{2}\left( \left\Vert \mathbf{v}_{\varepsilon
}\right\Vert _{L^{2}\left( \Omega \right) }^{2}+\varepsilon ^{2}\left\Vert
\nabla \mathbf{v}_{\varepsilon }\right\Vert _{L^{2}\left( \Omega \right)
}^{2}\right) \leq c_{2}\left\Vert \mathbf{v}_{\varepsilon }\right\Vert
_{H^{1}\left( \Omega \right) ^{N}}^{2}\text{.}  \label{eq2.13g}
\end{equation}%
Hence, combining (\ref{eq2.13c}), (\ref{eq2.13f}) and (\ref{eq2.13g}) we
obtain 
\begin{equation*}
\left\vert \mathbf{a}_{\varepsilon }\left( \mathbf{u}_{\varepsilon },\mathbf{%
v}_{\varepsilon }\right) \right\vert \leq c^{\prime \prime }c_{1}\frac{\sqrt{%
c^{\prime }c}}{\alpha }\lambda \left( \Omega \right) ^{\frac{1}{2}%
}\left\Vert \mathbf{f}\right\Vert _{\infty }\left\Vert p_{\varepsilon
}\right\Vert _{L^{2}\left( \Omega ^{\varepsilon }\right) }+c^{\prime \prime
\prime }c_{1}\sqrt{c_{2}}\sqrt{\frac{c^{\prime }c}{\alpha _{0}\alpha }}%
\lambda \left( \Omega \right) ^{\frac{1}{2}}\left\Vert \mathbf{f}\right\Vert
_{\infty }\left\Vert p_{\varepsilon }\right\Vert _{L^{2}\left( \Omega
^{\varepsilon }\right) }\text{, }
\end{equation*}%
i.e., 
\begin{equation}
\left\vert \mathbf{a}_{\varepsilon }\left( \mathbf{u}_{\varepsilon },\mathbf{%
v}_{\varepsilon }\right) \right\vert \leq c_{1}\sqrt{c^{\prime }c}\left( 
\frac{c^{\prime \prime }}{\alpha }+c^{\prime \prime \prime }\frac{\sqrt{c_{2}%
}}{\sqrt{\alpha _{0}\alpha }}\right) \lambda \left( \Omega \right) ^{\frac{1%
}{2}}\left\Vert \mathbf{f}\right\Vert _{\infty }\left\Vert p_{\varepsilon
}\right\Vert _{L^{2}\left( \Omega ^{\varepsilon }\right) }\text{.}
\label{eq2.13h}
\end{equation}%
On the other hand, 
\begin{equation}
\left\vert \int_{\Omega ^{\varepsilon }}\mathbf{f}^{\varepsilon }{\small %
\cdot }\mathbf{v}_{\varepsilon }dx\right\vert \leq \lambda \left( \Omega
\right) ^{\frac{1}{2}}\left\Vert \mathbf{f}\right\Vert _{\infty }\left\Vert 
\mathbf{v}_{\varepsilon }\right\Vert _{L^{2}\left( \Omega \right) }\leq
c_{1}\lambda \left( \Omega \right) ^{\frac{1}{2}}\left\Vert \mathbf{f}%
\right\Vert _{\infty }\left\Vert p_{\varepsilon }\right\Vert _{L^{2}\left(
\Omega ^{\varepsilon }\right) }\text{,}  \label{eq2.13i}
\end{equation}%
$c_{1}$ being the constant in (\ref{eq2.13c}). The inequalities (\ref%
{eq2.13h})-(\ref{eq2.13i}) and the relation (\ref{eq2.13b}) lead to 
\begin{equation}
\left\Vert p_{\varepsilon }\right\Vert _{L^{2}\left( \Omega ^{\varepsilon
}\right) }\leq c_{1}\left( \sqrt{c^{\prime }c}\left( \frac{c^{\prime \prime }%
}{\alpha }+c^{\prime \prime \prime }\frac{\sqrt{c_{2}}}{\sqrt{\alpha
_{0}\alpha }}\right) +1\right) \lambda \left( \Omega \right) ^{\frac{1}{2}%
}\left\Vert \mathbf{f}\right\Vert _{\infty }\text{, }  \label{eq2.13j}
\end{equation}%
for all $0<\varepsilon <1$. In view of (\ref{eq2.13d}), (\ref{eq2.13e}) and (%
\ref{eq2.13j}), there exists a positive constant $C$ independent of $%
\varepsilon $ such that (\ref{eq2.12a})-(\ref{eq2.12c}) are satisfied.
\end{proof}

Before we can establish the so-called global homogenization theorem for (\ref%
{eq1.3})-(\ref{eq1.7}), we require a few basic notation and results. To
begin, let 
\begin{equation*}
\mathcal{V}=\mathcal{D}\left( \Omega ;\mathbb{R}\right) ^{N}\text{,}
\end{equation*}%
\begin{equation*}
\mathbf{V=}H_{0}^{1}\left( \Omega ;\mathbb{R}\right) ^{N}
\end{equation*}%
\begin{equation*}
\mathcal{V}_{Y}=\left\{ \mathbf{\psi }\in \mathcal{C}_{per}^{\infty }\left(
Y;\mathbb{R}\right) ^{N}:\int_{Y}\mathbf{\psi }\left( y\right) dy=0\right\} 
\text{, }
\end{equation*}%
\begin{equation*}
\mathbf{V}_{Y}=H_{\#}^{1}\left( Y;\mathbb{R}\right) ^{N}\text{ }
\end{equation*}%
where: $\mathcal{C}_{per}^{\infty }\left( Y;\mathbb{R}\right) =\mathcal{C}%
^{\infty }\left( \mathbb{R}^{N};\mathbb{R}\right) \cap \mathcal{C}%
_{per}\left( Y\right) $. We provide $\mathbf{V}_{Y}$ with the $%
H_{\#}^{1}\left( Y\right) ^{N}$-norm, which makes it a Hilbert space. There
is no difficulty in verifying that $\mathcal{V}_{Y}$ is dense in $\mathbf{V}%
_{Y}$ . With this in mind, set 
\begin{equation*}
\mathbb{F}_{0}^{1}=\mathbf{V}\times L^{2}\left( \Omega ;\mathbf{V}%
_{Y}\right) \text{.}
\end{equation*}%
This is a Hilbert space with norm 
\begin{equation*}
\left\Vert \mathbf{v}\right\Vert _{\mathbb{F}_{0}^{1}}=\left( \left\Vert 
\mathbf{v}_{0}\right\Vert _{H_{0}^{1}\left( \Omega \right)
^{N}}^{2}+\left\Vert \mathbf{v}_{1}\right\Vert _{L^{2}\left( \Omega
;V_{Y}\right) }^{2}\right) ^{\frac{1}{2}}\text{, }\mathbf{v=}\left( \mathbf{v%
}_{0},\mathbf{v}_{1}\right) \in \mathbb{F}_{0}^{1}\text{.}
\end{equation*}%
On the other hand, put 
\begin{equation*}
\mathbf{\tciFourier }_{0}^{\infty }=\mathcal{V\times }\left[ \mathcal{D}%
\left( \Omega ;\mathbb{R}\right) \otimes \mathcal{V}_{Y}\right] \text{,}
\end{equation*}%
where $\mathcal{D}\left( \Omega ;\mathbb{R}\right) \otimes \mathcal{V}_{Y}$
stands for the space of vector functions $\mathbf{\phi }$ on $\Omega \times 
\mathbb{R}_{y}^{N}$ of the form 
\begin{equation*}
\mathbf{\phi }\left( x,y\right) =\sum_{finite}\varphi _{i}\left( x\right) 
\mathbf{w}_{i}\left( y\right) \text{ }\left( x\in \Omega ,\text{ }y\in 
\mathbb{R}^{N}\right)
\end{equation*}%
with $\varphi _{i}\in \mathcal{D}\left( \Omega ;\mathbb{R}\right) $, $%
\mathbf{w}_{i}\in \mathcal{V}_{Y}$. It is clear that $\mathbf{\tciFourier }%
_{0}^{\infty }$ is dense in $\mathbb{F}_{0}^{1}$.

It is of interest to notice that for $\mathbf{v}=\left( \mathbf{v}_{0},%
\mathbf{v}_{1}\right) \in \mathbb{F}_{0}^{1}$ with $\mathbf{v}_{0}=\left(
v_{0}^{k}\right) _{1\leq k\leq N}$ and $\mathbf{v}_{1}=\left(
v_{1}^{k}\right) _{1\leq k\leq N}$, if we set

\begin{equation*}
\mathbb{D}_{j}\mathbf{v}^{k}=\frac{\partial v_{0}^{k}}{\partial x_{j}}+\frac{%
\partial v_{1}^{k}}{\partial y_{j}}\text{ }\qquad \left( 1\leq j,k\leq
N\right) \text{,}
\end{equation*}%
then we have 
\begin{equation}
\left\Vert \mathbf{v}\right\Vert _{\mathbb{F}_{0}^{1}}=\left(
\sum_{j,k=1}^{N}\left\Vert \mathbb{D}_{j}\mathbf{v}^{k}\right\Vert
_{L^{2}\left( \Omega ;L_{per}^{2}\left( Y\right) \right) }^{2}\right) ^{%
\frac{1}{2}}\qquad \left( \mathbf{v}=\left( v_{0},v_{1}\right) \in \mathbb{F}%
_{0}^{1}\right) \text{.}  \label{eq2.11}
\end{equation}

Now, for $\mathbf{u}=\left( \mathbf{u}_{0},\mathbf{u}_{1}\right) $ and $%
\mathbf{v}=\left( \mathbf{v}_{0},\mathbf{v}_{1}\right) \in \mathbb{F}%
_{0}^{1} $ we set 
\begin{equation*}
\widehat{a}_{\Omega }\left( \mathbf{u},\mathbf{v}\right)
=\sum_{i,j,k=1}^{N}\int \int_{\Omega \times Y^{\ast }}a_{ij}\mathbb{D}_{j}%
\mathbf{u}^{k}\mathbb{D}_{i}\mathbf{v}^{k}dxdy+\int \int_{\Omega \times
\partial T}\mathcal{\theta }\mathbf{u}_{0}{\small \cdot }\mathbf{v}%
_{0}dxd\sigma \text{.}
\end{equation*}%
This defines a bilinear form $\widehat{a}_{\Omega }\left( ,\right) $ on $%
\mathbb{F}_{0}^{1}\times \mathbb{F}_{0}^{1}$ which in view of (\ref{eq1.2a}%
)-(\ref{eq1.2}), is symmetric, positive, continuous and noncoercive. Indeed,
for some $\mathbf{u}=\left( \mathbf{u}_{0},\mathbf{u}_{1}\right) \in \mathbb{%
F}_{0}^{1}$, $\widehat{a}_{\Omega }\left( \mathbf{u},\mathbf{u}\right) =0$
if and only if $\mathbf{u}_{0}=0$ and $\frac{\partial u_{1}^{k}}{\partial
y_{j}}\left( x,y\right) =0$ a.e. in $\left( x,y\right) \in \Omega \times
Y^{\ast }$ ($1\leq j,k\leq N$). Thus, $\mathbf{u}=\left( \mathbf{u}_{0},%
\mathbf{u}_{1}\right) $ is not necessarily the zero function in $\mathbb{F}%
_{0}^{1}$. However, we put 
\begin{equation*}
\mathbf{N}\left( \mathbf{v}\right) =\left( \sum_{j,k=1}^{N}\int \int_{\Omega
\times Y^{\ast }}\left\vert \mathbb{D}_{j}\mathbf{v}^{k}\right\vert
^{2}dxdy+\int_{\Omega }\left\vert \mathbf{v}_{0}\right\vert ^{2}dx\right) ^{%
\frac{1}{2}}
\end{equation*}%
for all $\mathbf{v}=\left( \mathbf{v}_{0},\mathbf{v}_{1}\right) \in \mathbb{F%
}_{0}^{1}$. This defines a seminorm on $\mathbb{F}_{0}^{1}$. Equipped with
the seminorm $\mathbf{N}\left( .\right) $, $\mathbb{F}_{0}^{1}$ is a
pre-Hilbert space which is nonseparated and noncomplete. Further, let us
consider the linear form $\widehat{l}_{\Omega }$ on $\mathbb{F}_{0}^{1}$
defined by 
\begin{equation*}
\widehat{l}_{\Omega }\left( \mathbf{v}\right) =\int \int_{\Omega \times
Y^{\ast }}\mathbf{f}\left( y\right) {\small \cdot }\mathbf{v}_{0}\left(
x\right) dxdy
\end{equation*}%
for all $\mathbf{v}=\left( \mathbf{v}_{0},\mathbf{v}_{1}\right) \in \mathbb{F%
}_{0}^{1}$. The form $\widehat{l}_{\Omega }$ is continuous on $\mathbb{F}%
_{0}^{1}$ for the norm $\left\Vert .\right\Vert _{\mathbb{F}_{0}^{1}}$ and
the seminorm $\mathbf{N}\left( .\right) $. Therefore, we have the following
convergence theorem..

\begin{lemma}
\label{lem3.2} Suppose (\ref{eq1.2a})-(\ref{eq1.2}) and (\ref{eq1.8a}) hold.
There exists $\mathbf{u=}\left( \mathbf{u}_{0},\mathbf{u}_{1}\right) \in 
\mathbb{F}_{0}^{1}$ satisfying the variational problem%
\begin{equation*}
\widehat{a}_{\Omega }\left( \mathbf{u},\mathbf{v}\right) =\widehat{l}%
_{\Omega }\left( \mathbf{v}\right) \text{ \quad for all }\mathbf{v}\in 
\mathbb{F}_{0}^{1}\text{.}
\end{equation*}%
Moreover, $\mathbf{u}_{0}$ is strictly unique and $\mathbf{u}_{1}$ is unique
up to an additive vector function $\mathbf{g=}\left( g^{k}\right) _{1\leq
k\leq N}\in L^{2}\left( \Omega ;V_{Y}\right) $ such that $\frac{\partial
g^{k}}{\partial y_{j}}\left( x,y\right) =0$ almost everywhere in $\left(
x,y\right) \in \Omega \times Y^{\ast }$ $\left( 1\leq j,k\leq N\right) $.
\end{lemma}

\begin{proof}
The proof of this lemma is a simple adaptation of the one in \cite[Lemma 2.5]%
{bib11}. So, for shortness we omit it.
\end{proof}

Now, let us state our convergence theorem.

\begin{theorem}
\label{th3.1} Suppose that the hypotheses (\ref{eq1.2a})-(\ref{eq1.2}) and (%
\ref{eq1.8a}) are satisfied. For $\varepsilon \in E$, let $\left( \mathbf{u}%
_{\varepsilon },p_{\varepsilon }\right) \in \mathbf{V}_{\varepsilon }\times
\left( L^{2}\left( \Omega ^{\varepsilon }\right) \mathfrak{/}\mathbb{R}%
\right) $ be the unique solution to (\ref{eq1.3})-(\ref{eq1.7}) ($E$ being a
fundamental sequence), and let $\mathcal{P}^{\varepsilon }$ be the extension
operator of Proposition \ref{pr2.1}. Then, a subsequence $E^{\prime }$ can
be extracted from $E$ such that as $E^{\prime }\ni \varepsilon \rightarrow 0$%
, 
\begin{equation}
\mathcal{P}^{\varepsilon }\mathbf{u}_{\varepsilon }\rightarrow \mathbf{u}_{0}%
\text{ in }H_{0}^{1}\left( \Omega \right) ^{N}\text{-weak,}  \label{eq2.15a}
\end{equation}%
\begin{equation}
\widetilde{p}_{\varepsilon }\rightarrow p_{0}\text{ in }L^{2}\left( \Omega
\right) \text{-weak,}  \label{eq2.15b}
\end{equation}%
\begin{equation}
\widetilde{p}_{\varepsilon }\rightarrow p_{1}\text{ in }L^{2}\left( \Omega
\right) \text{-weak }\Sigma  \label{eq2.15c}
\end{equation}%
where $\widetilde{p}_{\varepsilon }$ is the extension by zero of $%
p_{\varepsilon }$ in $\Omega $. Moreover, there exists some $\mathbf{u}%
_{1}\in L^{2}\left( \Omega ;\mathbf{V}_{Y}\right) $ such that $\mathbf{u}%
=\left( \mathbf{u}_{0},\mathbf{u}_{1}\right) $, $p_{0}$ and $p_{1}$ verify
the following variational problem: 
\begin{equation}
\left\{ 
\begin{array}{c}
\mathbf{u=}\left( \mathbf{u}_{0},\mathbf{u}_{1}\right) \in \mathbb{F}_{0}^{1}%
\text{; \qquad \qquad \qquad \qquad \qquad \qquad \qquad \qquad \qquad
\qquad \qquad \qquad \qquad \qquad } \\ 
\widehat{a}_{\Omega }\left( \mathbf{u},\mathbf{v}\right) =\widehat{l}%
_{\Omega }\left( \mathbf{v}\right) +\int_{\Omega }p_{0}div\mathbf{v}%
_{0}dx+\int \int_{\Omega \times Y}p_{1}div_{y}\mathbf{v}_{1}dxdy\text{ \ for
all }\mathbf{v=}\left( \mathbf{v}_{0},\mathbf{v}_{1}\right) \in \mathbb{F}%
_{0}^{1}\text{ ,}%
\end{array}%
\right.  \label{eq2.12}
\end{equation}%
where $div_{y}$ denotes the divergence operator in $\mathbb{R}_{y}^{N}$.
\end{theorem}

\begin{proof}
Let $E$ be a fundamental sequence. According to Proposition \ref{pr3.1}, the
sequences $\left( \mathcal{P}^{\varepsilon }\mathbf{u}_{\varepsilon }\right)
_{\varepsilon \in E}$ and $\left( \widetilde{p}_{\varepsilon }\right)
_{\varepsilon \in E}$ are bounded in $H_{0}^{1}\left( \Omega \right) ^{N}$
and $L^{2}\left( \Omega \right) $ respectively in view of (\ref{eq2.12a})
and (\ref{eq2.12c}). Thus, Theorems \ref{th2.1} and \ref{th2.2} yield a
subsequence $E^{\prime }$ extracted from $E$, a vector function $\mathbf{u}%
=\left( \mathbf{u}_{0},\mathbf{u}_{1}\right) \in H_{0}^{1}\left( \Omega ;%
\mathbb{R}\right) ^{N}\times L^{2}\left( \Omega ;H_{\#}^{1}\left( Y;\mathbb{R%
}\right) ^{N}\right) $ and a function $p_{1}\in L^{2}\left( \Omega
;L_{per}^{2}\left( Y;\mathbb{R}\right) \right) $ such that (\ref{eq2.15a})-(%
\ref{eq2.15c}) hold with $p_{0}\left( x\right) =\int_{Y}p_{1}\left(
x,y\right) dy$ a.e. in $x\in \Omega $, and 
\begin{equation}
\frac{\partial \left( \mathcal{P}^{\varepsilon }\mathbf{u}_{\varepsilon
}\right) ^{k}}{\partial x_{j}}\rightarrow \frac{\partial u_{0}^{k}}{\partial
x_{j}}+\frac{\partial u_{1}^{k}}{\partial y_{j}}\quad \text{ in }L^{2}\left(
\Omega \right) \text{-weak }\Sigma  \label{eq2.15d}
\end{equation}%
as $E^{\prime }\ni \varepsilon \rightarrow 0$. Let us check that $\mathbf{u}%
=\left( \mathbf{u}_{0},\mathbf{u}_{1}\right) $, $p_{0}$ and $p_{1}$ verify (%
\ref{eq2.12}). We notice at once that $\mathbf{u}=\left( \mathbf{u}_{0},%
\mathbf{u}_{1}\right) \in \mathbb{F}_{0}^{1}$.\ For each real $\varepsilon
>0 $, let 
\begin{equation*}
\mathbf{\phi }_{\varepsilon }=\mathbf{\phi }_{0}+\varepsilon \mathbf{\phi }%
_{1}^{\varepsilon }\text{ with }\mathbf{\phi }_{0}=\left( \phi
_{0}^{k}\right) _{1\leq k\leq N}\in \mathcal{V}=\mathcal{D}\left( \Omega ;%
\mathbb{R}\right) ^{N}\text{, }\mathbf{\phi }_{1}=\left( \phi
_{1}^{k}\right) _{1\leq k\leq N}\in \mathcal{D}\left( \Omega ;\mathbb{R}%
\right) \otimes \mathcal{V}_{Y}\text{,}
\end{equation*}%
i.e., $\mathbf{\phi }_{\varepsilon }\left( x\right) =\mathbf{\phi }%
_{0}\left( x\right) +\varepsilon \mathbf{\phi }_{1}\left( x,\frac{x}{%
\varepsilon }\right) $ for $x\in \Omega $. Clearly, we have $\mathbf{\phi }%
_{\varepsilon }\in \mathcal{D}\left( \Omega ;\mathbb{R}\right) ^{N}$ and
further, all the functions $\mathbf{\phi }_{\varepsilon }$ $\left(
\varepsilon >0\right) $ have their supports contained in a fixed compact set 
$K\subset \Omega $. Consequently, in virtue of Lemma \ref{lem2.2}, there is
some $\varepsilon _{0}$ with $0<\varepsilon _{0}<1$ such that 
\begin{equation}
\mathbf{\phi }_{\varepsilon }=0\text{ in }\Omega ^{\varepsilon }\backslash
Q^{\varepsilon }\text{\qquad }\left( 0<\varepsilon \leq \varepsilon
_{0}\right) \text{.}  \label{eq2.14a}
\end{equation}%
This being so, taking a scalar product of (\ref{eq1.3}) with $\mathbf{v}=%
\mathbf{\phi }_{\varepsilon }{\small \mid }_{\Omega ^{\varepsilon }}$ ($%
\mathbf{\phi }_{\varepsilon }{\small \mid }_{\Omega ^{\varepsilon }}$ being
de restriction of $\mathbf{\phi }_{\varepsilon }$ to $\Omega ^{\varepsilon }$%
) and using an integration by part lead us to 
\begin{equation}
\mathbf{a}_{\varepsilon }\left( \mathbf{u}_{\varepsilon },\mathbf{\phi }%
_{\varepsilon }{\small \mid }_{\Omega ^{\varepsilon }}\right) -\int_{\Omega
^{\varepsilon }}p_{\varepsilon }div\mathbf{\phi }_{\varepsilon
}dx=\int_{\Omega ^{\varepsilon }}\mathbf{f}^{\varepsilon }{\small \cdot }%
\mathbf{\phi }_{\varepsilon }dx  \label{eq2.13}
\end{equation}%
in virtue of the boundary conditions (\ref{eq1.5})-(\ref{eq1.7}). Further,
by the decomposition $\Omega ^{\varepsilon }=Q^{\varepsilon }\cup \left(
\Omega ^{\varepsilon }{\small \diagdown }Q^{\varepsilon }\right) $ and use
of $Q^{\varepsilon }=\Omega \cap \varepsilon G$, the equality (\ref{eq2.13})
yields 
\begin{equation}
\sum_{i,j,k=1}^{N}\int_{\Omega }a_{ij}^{\varepsilon }\frac{\partial \left( 
\mathcal{P}^{\varepsilon }\mathbf{u}_{\varepsilon }\right) ^{k}}{\partial
x_{j}}\frac{\partial \phi _{\varepsilon }^{k}}{\partial x_{i}}\mathcal{\chi }%
_{G}^{\varepsilon }dx+\varepsilon \int_{\partial T^{\varepsilon }}\mathcal{%
\theta }^{\varepsilon }\mathcal{P}^{\varepsilon }\mathbf{u}_{\varepsilon }%
{\small \cdot }\mathbf{\phi }_{\varepsilon }d\sigma _{\varepsilon
}-\int_{\Omega }\widetilde{p}_{\varepsilon }div\mathbf{\phi }_{\varepsilon
}dx=\int_{\Omega }\mathbf{f}^{\varepsilon }{\small \cdot }\mathbf{\phi }%
_{\varepsilon }\mathcal{\chi }_{G}^{\varepsilon }dx  \label{eq2.14}
\end{equation}%
for all $0<\varepsilon \leq \varepsilon _{0}$, in view of (\ref{eq2.14a}).

Let us pass to the limit in (\ref{eq2.14}) when $E^{\prime }\ni \varepsilon
\rightarrow 0$. First, by (\ref{eq2.10c}) and (\ref{eq2.10a}) we see that $%
a_{ij}\mathcal{\chi }_{G}$ and $f_{j}\mathcal{\chi }_{G}$ belong to $%
L_{per}^{\infty }\left( Y\right) $. Further, for $1\leq i,k\leq N$, the
sequence $\left( \frac{\partial \phi _{\varepsilon }^{k}}{\partial x_{i}}%
\right) _{\varepsilon \in E}$ is bounded in $L^{\infty }\left( \Omega
\right) $ and 
\begin{equation*}
\frac{\partial \phi _{\varepsilon }^{k}}{\partial x_{i}}\rightarrow \mathbb{D%
}_{i}\mathbf{\phi }^{k}=\frac{\partial \phi _{0}^{k}}{\partial x_{i}}+\frac{%
\partial \phi _{1}^{k}}{\partial y_{i}}\text{ in }L^{2}\left( \Omega \right) 
\text{-strong }\Sigma
\end{equation*}%
as $\varepsilon \rightarrow 0$ (see, e.g., \cite[Lemma 2.2]{bib10} for
details). Then, according to (\ref{eq2.15b}) it follows by Lemma \ref{lem2.4}
that 
\begin{equation*}
\frac{\partial \left( \mathcal{P}^{\varepsilon }\mathbf{u}_{\varepsilon
}\right) ^{k}}{\partial x_{j}}\frac{\partial \phi _{\varepsilon }^{k}}{%
\partial x_{i}}\rightarrow \mathbb{D}_{j}\mathbf{u}^{k}\mathbb{D}_{i}\mathbf{%
\phi }^{k}\text{ in }L^{2}\left( \Omega \right) \text{-weak }\Sigma
\end{equation*}%
as $E^{\prime }\ni \varepsilon \rightarrow 0$. Moreover, by Remark \ref%
{rem2.2} we see that 
\begin{equation}
\sum_{i,j,k=1}^{N}\int_{\Omega }a_{ij}^{\varepsilon }\frac{\partial \left( 
\mathcal{P}^{\varepsilon }\mathbf{u}_{\varepsilon }\right) ^{k}}{\partial
x_{j}}\frac{\partial \phi _{\varepsilon }^{k}}{\partial x_{i}}\mathcal{\chi }%
_{G}^{\varepsilon }dx\rightarrow \sum_{i,j,k=1}^{N}\int \int_{\Omega \times
Y}a_{ij}\mathbb{D}_{j}\mathbf{u}^{k}\mathbb{D}_{i}\mathbf{\phi }^{k}\mathcal{%
\chi }_{G}dxdy  \label{eq2.15}
\end{equation}%
as $E^{\prime }\ni \varepsilon \rightarrow 0$. On the other hand, by (\ref%
{eq2.12b}) of Proposition \ref{pr3.1}, Proposition \ref{pr2.2} and (\ref%
{eq2.15a}) the subsequence $E^{\prime }$ can be extracted from $E$ such that 
\begin{equation*}
\varepsilon \int_{\partial T^{\varepsilon }}\mathcal{\theta }^{\varepsilon }%
\mathcal{P}^{\varepsilon }\mathbf{u}_{\varepsilon }{\small \cdot }\mathbf{%
\phi }_{0}d\sigma _{\varepsilon }\rightarrow \int \int_{\Omega \times
\partial T}\mathcal{\theta }\left( y\right) \mathbf{u}_{0}\left( x\right) 
{\small \cdot }\mathbf{\phi }_{0}\left( x\right) dxd\sigma \left( y\right)
\end{equation*}%
as $E^{\prime }\ni \varepsilon \rightarrow 0$, in virtue of Remark \ref%
{rem2.2} since $\mathcal{\theta }\phi _{0}^{k}\in \mathcal{C}\left( 
\overline{\Omega };L_{per}^{\infty }\left( Y\right) \right) $. Furthermore,
by the same argument we see that 
\begin{equation*}
\varepsilon ^{2}\int_{\partial T^{\varepsilon }}\mathcal{\theta }%
^{\varepsilon }\mathcal{P}^{\varepsilon }\mathbf{u}_{\varepsilon }{\small %
\cdot }\mathbf{\phi }_{1}^{\varepsilon }d\sigma _{\varepsilon }\rightarrow 0
\end{equation*}%
as $E^{\prime }\ni \varepsilon \rightarrow 0$, since $\mathcal{\theta }\phi
_{1}^{k}\in \mathcal{C}\left( \overline{\Omega };L_{per}^{\infty }\left(
Y\right) \right) $. Thus, as $E^{\prime }\ni \varepsilon \rightarrow 0$, 
\begin{equation}
\varepsilon \int_{\partial T^{\varepsilon }}\mathcal{\theta }^{\varepsilon }%
\mathcal{P}^{\varepsilon }\mathbf{u}_{\varepsilon }{\small \cdot }\mathbf{%
\phi }_{\varepsilon }d\sigma _{\varepsilon }\rightarrow \int \int_{\Omega
\times \partial T}\mathcal{\theta }\left( y\right) \mathbf{u}_{0}\left(
x\right) {\small \cdot }\mathbf{\phi }_{0}\left( x\right) dxd\sigma \left(
y\right) \text{.}  \label{eq2.16}
\end{equation}%
Once more, we use Remark \ref{rem2.2} to have 
\begin{equation}
\int_{\Omega }\mathbf{f}^{\varepsilon }{\small \cdot }\mathbf{\phi }%
_{\varepsilon }\mathcal{\chi }_{G}^{\varepsilon }dx\rightarrow \int
\int_{\Omega \times Y}\mathbf{f}{\small \cdot }\mathbf{\phi }_{0}\mathcal{%
\chi }_{G}dxdy\text{, }  \label{eq2.17}
\end{equation}%
as $E^{\prime }\ni \varepsilon \rightarrow 0$. Further, according to (\ref%
{eq2.15c}) as $E^{\prime }\ni \varepsilon \rightarrow 0$ we have 
\begin{equation*}
\int_{\Omega }\widetilde{p}_{\varepsilon }div\mathbf{\phi }_{\varepsilon
}dx\rightarrow \int \int_{\Omega \times Y}p_{1}\left( div\mathbf{\phi }%
_{0}+div_{y}\mathbf{\phi }_{1}\right) dxdy\text{. }
\end{equation*}%
Finally, we pass to the limit in (\ref{eq2.14}) as $E^{\prime }\ni
\varepsilon \rightarrow 0$ and we obtain by (\ref{eq2.15})-(\ref{eq2.17})
and (\ref{eq2.10d}), 
\begin{equation}
\widehat{a}_{\Omega }\left( \mathbf{u},\mathbf{\phi }\right) -\int_{\Omega
}p_{0}div\mathbf{\phi }_{0}dx+\int \int_{\Omega \times Y}p_{1}div_{y}\mathbf{%
\phi }_{1}dxdy=\widehat{l}_{\Omega }\left( \mathbf{\phi }\right) \text{,}
\label{eq2.18}
\end{equation}%
for all $\mathbf{\phi }=\left( \mathbf{\phi }_{0},\mathbf{\phi }_{1}\right)
\in \mathbf{\tciFourier }_{0}^{\infty }$. Thus, using the density of $%
\mathbf{\tciFourier }_{0}^{\infty }$ in $\mathbb{F}_{0}^{1}$ and the
continuity of the forms $\widehat{a}_{\Omega }\left( ,\right) $ and $%
\widehat{l}_{\Omega }$, we see that $\mathbf{u}=\left( \mathbf{u}_{0},%
\mathbf{u}_{1}\right) $, $p_{0}$ and $p_{1}$ verify (\ref{eq2.12}). The
proof of the theorem is complete.
\end{proof}

Now, we wish to give a simple representation of the vector function $\mathbf{%
u}_{1}$ in Theorem \ref{th3.1} (or Lemma \ref{lem3.2}) for further needs.
For this purpose we introduce the bilinear form $\widehat{a}$ on $\mathbf{V}%
_{Y}\times \mathbf{V}_{Y}$ defined by%
\begin{equation*}
\widehat{a}\left( \mathbf{v},\mathbf{w}\right)
=\sum_{i,j,k=1}^{N}\int_{Y^{\ast }}a_{ij}\frac{\partial v^{k}}{\partial y_{j}%
}\frac{\partial w^{k}}{\partial y_{i}}dy
\end{equation*}%
for $\mathbf{v}=\left( v^{k}\right) $ and $\mathbf{w}=\left( w^{k}\right) $
in $\mathbf{V}_{Y}$. Next, for each couple of indices $1\leq i,k\leq N$, we
consider the variational problem%
\begin{equation}
\left\{ 
\begin{array}{c}
\mathbf{\chi }_{ik}\in \mathbf{V}_{Y}:\qquad \qquad \qquad \qquad \\ 
\widehat{a}\left( \mathbf{\chi }_{ik},\mathbf{w}\right)
=\sum_{l=1}^{N}\int_{Y}a_{li}\frac{\partial w^{k}}{\partial y_{l}}dy \\ 
\text{for all }\mathbf{w}=\left( w^{k}\right) \ \text{in }\mathbf{V}_{Y}%
\text{,\qquad }%
\end{array}%
\right.  \label{eq2.19}
\end{equation}%
which admits a solution $\mathbf{\chi }_{ik}$, unique up to an additive
vector function $\mathbf{g}=\left( g^{k}\right) \in \mathbf{V}_{Y}$ such
that $\frac{\partial g^{k}}{\partial y_{j}}=0$ a.e. in $Y^{\ast }$.

\begin{lemma}
\label{lem3.3} Under the hypotheses and notations of Theorem \ref{th3.1},
there is some vector function $\mathbf{g}=\left( g^{k}\right) \in
L^{2}\left( \Omega ;\mathbf{V}_{Y}\right) $ such that $\frac{\partial g^{k}}{%
\partial y_{j}}=0$ a.e. in $\Omega \times Y^{\ast }$ and 
\begin{equation}
\mathbf{u}_{1}\left( x,y\right) =-\sum_{i,k=1}^{N}\frac{\partial u_{0}^{k}}{%
\partial x_{i}}\left( x\right) \mathbf{\chi }_{ik}\left( y\right) +\mathbf{g}%
\left( x,y\right)  \label{eq2.20a}
\end{equation}%
almost everywhere in $\left( x,y\right) \in \Omega \times \mathbb{R}^{N}$.
\end{lemma}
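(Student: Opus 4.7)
The plan is to obtain the representation by testing the variational equation \eqref{eq2.12} against test functions $\mathbf{v} = (\mathbf{0}, \mathbf{v}_1)$ supported only in the ``microscopic'' variable, thereby isolating a cell equation satisfied by $\mathbf{u}_1(x,\cdot)$ for a.e.\ $x$, and then matching that cell equation against the one defining $\mathbf{\chi}_{ik}$.

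Concretely, I would first fix $\mathbf{w} \in \mathbf{V}_Y$ and $\varphi \in \mathcal{D}(\Omega;\mathbb{R})$, and take $\mathbf{v} = (\mathbf{0}, \varphi \otimes \mathbf{w}) \in \mathbb{F}_0^1$ in \eqref{eq2.12}. Since $\mathbf{v}_0 = \mathbf{0}$, both the surface term in $\widehat{a}_\Omega$ and the right-hand side $\widehat{l}_\Omega(\mathbf{v})$ vanish, and one is left with
\begin{equation*}
\sum_{i,j,k=1}^{N}\int\!\!\int_{\Omega \times Y^{\ast}} a_{ij}(y)\Bigl(\tfrac{\partial u_0^{k}}{\partial x_j}(x) + \tfrac{\partial u_1^{k}}{\partial y_j}(x,y)\Bigr)\varphi(x)\tfrac{\partial w^{k}}{\partial y_i}(y)\, dx\, dy = 0.
\end{equation*}
Because $\varphi \in \mathcal{D}(\Omega)$ is arbitrary, the variational Lebesgue lemma yields, for a.e.\ $x \in \Omega$ and for every $\mathbf{w} \in \mathbf{V}_Y$,
\begin{equation*}
\widehat{a}\bigl(\mathbf{u}_1(x,\cdot), \mathbf{w}\bigr) = -\sum_{i,j,k=1}^{N} \tfrac{\partial u_0^{k}}{\partial x_j}(x) \int_{Y^{\ast}} a_{ij}(y)\tfrac{\partial w^{k}}{\partial y_i}(y)\, dy.
\end{equation*}

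Next I would identify the right-hand side with a linear combination of the cell problems \eqref{eq2.19}. After relabelling the dummy index $i \to l$, the inner $y$-integral (for each fixed pair $(j,k)$) coincides with the right-hand side of the cell problem defining $\mathbf{\chi}_{jk}$, so the previous display rewrites as
\begin{equation*}
\widehat{a}\Bigl(\mathbf{u}_1(x,\cdot) + \sum_{i,k=1}^{N} \tfrac{\partial u_0^{k}}{\partial x_i}(x)\,\mathbf{\chi}_{ik},\ \mathbf{w}\Bigr) = 0 \qquad \text{for all } \mathbf{w} \in \mathbf{V}_Y.
\end{equation*}
Setting $\mathbf{g}(x,\cdot) := \mathbf{u}_1(x,\cdot) + \sum_{i,k} \tfrac{\partial u_0^{k}}{\partial x_i}(x)\,\mathbf{\chi}_{ik}$, this is exactly the statement that $\mathbf{g}(x,\cdot)$ lies in the kernel of $\widehat{a}$ on $\mathbf{V}_Y$, which by the ellipticity assumption \eqref{eq1.2} forces $\tfrac{\partial g^{k}}{\partial y_j} = 0$ a.e.\ in $Y^{\ast}$, exactly the non-uniqueness class already identified for the cell problem and for $\mathbf{u}_1$ itself in Lemma~\ref{lem3.2}. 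Rearranging yields the claimed formula \eqref{eq2.20a}.

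Finally I would check the measurability/integrability: $\mathbf{g}$ is defined as a finite sum of products of $L^2(\Omega)$ functions of $x$ with fixed elements of $\mathbf{V}_Y$, plus $\mathbf{u}_1 \in L^2(\Omega; \mathbf{V}_Y)$, hence $\mathbf{g} \in L^2(\Omega; \mathbf{V}_Y)$. The only delicate point is the passage from the integrated identity in $(x,y)$ to the pointwise-in-$x$ cell identity; this is routine once one notes that both sides depend linearly and continuously on $\varphi$, so localization via a countable dense family of test functions $\mathbf{w}$ in the separable space $\mathbf{V}_Y$ gives a common null set outside of which the pointwise equation holds for all $\mathbf{w}$. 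No other substantial obstacle is expected.
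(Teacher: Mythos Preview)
Your proposal is correct and follows essentially the same route as the paper: both proofs test \eqref{eq2.12} with $\mathbf{v}=(\mathbf{0},\varphi\otimes\mathbf{w})$, localize in $x$ to obtain the cell equation \eqref{eq2.20}, and then match against the definition \eqref{eq2.19} of $\mathbf{\chi}_{ik}$ to identify $\mathbf{u}_1$ up to an element of the kernel. Your version is in fact slightly more thorough, since you spell out the separability argument for the pointwise-in-$x$ passage and the verification that $\mathbf{g}\in L^2(\Omega;\mathbf{V}_Y)$, which the paper leaves implicit.
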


\begin{proof}
In (\ref{eq2.12}), choose the test functions $\mathbf{v}=\left( \mathbf{v}%
_{0},\mathbf{v}_{1}\right) $ such that $\mathbf{v}_{0}=0$, $\mathbf{v}%
_{1}\left( x,y\right) =\varphi \left( x\right) \mathbf{w}\left( y\right) $
for $\left( x,y\right) \in \Omega \times \mathbb{R}^{N}$, where $\varphi \in 
\mathcal{D}\left( \Omega ;\mathbb{R}\right) $ and $\mathbf{w}\in \mathbf{V}%
_{Y}$. Then, almost everywhere in $x\in \Omega $, we have 
\begin{equation}
\left\{ 
\begin{array}{c}
\widehat{a}\left( \mathbf{u}_{1}\left( x,.\right) ,\mathbf{w}\right)
=-\sum_{l,j,k=1}^{N}\frac{\partial u_{0}^{k}}{\partial x_{j}}\left( x\right)
\int_{Y^{\ast }}a_{lj}\frac{\partial w^{k}}{\partial y_{l}}dy \\ 
\text{for all }\mathbf{w}=\left( w^{k}\right) \in \mathbf{V}_{Y}\text{%
.\qquad \qquad \qquad \qquad \qquad \quad }%
\end{array}%
\right.  \label{eq2.20}
\end{equation}%
But it is clear that up to and additive function $\mathbf{g}\left( x\right)
=\left( g^{k}\left( x\right) \right) \in \mathbf{V}_{Y}$ such that $\frac{%
\partial g^{k}\left( x\right) }{\partial y_{j}}=0$ a.e. in $Y^{\ast }$, $%
\mathbf{u}_{1}\left( x,.\right) $ (for fixed $x\in \Omega $) is the sole
function in $\mathbf{V}_{Y}$ solving the variational equation (\ref{eq2.20}%
). On the other hand, it is an easy matter to check that the function of $y$
on the right of (\ref{eq2.20a}) solves the same variational problem. Hence
the lemma follows immediately.
\end{proof}

Now, let us state the so-called macroscopic homogenized equations for (\ref%
{eq1.3})-(\ref{eq1.7}). Our goal here is to derive a well-posed boundary
value problem for $\left( \mathbf{u}_{0},p_{0}\right) $. To begin, for $%
1\leq i,j,k,h\leq N$, let 
\begin{equation}
q_{ijkh}=\delta _{kh}\int_{Y^{\ast }}a_{ij}\left( y\right)
dy-\sum_{l=1}^{N}\int_{Y^{\ast }}a_{il}\left( y\right) \frac{\partial 
\mathcal{\chi }_{jh}^{k}}{\partial y_{l}}\left( y\right) dy\text{,}
\label{eq2.21a}
\end{equation}%
where: $\delta _{kh}$ is the Kronecker symbol, $\mathbf{\chi }_{jh}=\left( 
\mathcal{\chi }_{jh}^{k}\right) $ is defined exactly as in (\ref{eq2.19}).
To the coefficients $q_{ijkh}$ we attach the differential operator $\mathcal{%
Q}$ on $\Omega $ mapping $\mathcal{D}^{\prime }\left( \Omega \right) ^{N}$
into $\mathcal{D}^{\prime }\left( \Omega \right) ^{N}$ ($\mathcal{D}^{\prime
}\left( \Omega \right) $ is the usual space of complex distributions on $%
\Omega $) as 
\begin{equation*}
\left\{ 
\begin{array}{c}
\left( \mathcal{Q}\mathbf{z}\right) ^{k}=-\sum_{i,j,h=1}^{N}q_{ijkh}\frac{%
\partial ^{2}z^{h}}{\partial x_{i}\partial x_{^{j}}}\text{ }\left( 1\leq
k\leq N\right) \quad \\ 
\text{for }\mathbf{z=}\left( z^{h}\right) \text{, }z^{h}\in \mathcal{D}%
^{\prime }\left( \Omega \right) \text{.\qquad \qquad \qquad \qquad \quad }%
\end{array}%
\right.
\end{equation*}%
$\mathcal{Q}$ is the so-called homogenized operator associated to $%
P^{\varepsilon }$ $\left( 0<\varepsilon <1\right) $.

We consider now the boundary value problem 
\begin{equation}
\quad \mathcal{Q}\mathbf{u}_{0}+\widetilde{\mathcal{\theta }}\mathbf{u}_{0}+%
\mathbf{grad}p_{0}=\widetilde{\mathbf{f}}\text{ in }\Omega \text{,}
\label{eq2.21}
\end{equation}%
\begin{equation}
\qquad \qquad \qquad \qquad \qquad \mathbf{u}_{0}=0\text{ on }\partial
\Omega   \label{eq2.23}
\end{equation}%
where $\widetilde{\mathcal{\theta }}=\int_{\partial T}\mathcal{\theta }%
\left( y\right) d\sigma \left( y\right) $ and $\widetilde{\mathbf{f}}%
=\int_{Y^{\ast }}\mathbf{f}\left( y\right) dy$.

\begin{lemma}
\label{lem3.4} Under the hypotheses of Theorem \ref{th3.1}, the boundary
value problem (\ref{eq2.21})-(\ref{eq2.23}) admits at most one weak solution 
$\left( \mathbf{u}_{0},p_{0}\right) $ with $\mathbf{u}_{0}\in
H_{0}^{1}\left( \Omega ;\mathbb{R}\right) ^{N}$, $p_{0}\in L^{2}\left(
\Omega ;\mathbb{R}\right) \mathfrak{/}\mathbb{R}$.
\end{lemma}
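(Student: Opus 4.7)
The plan is a standard saddle-point uniqueness argument. By linearity of (\ref{eq2.21})--(\ref{eq2.23}), it is enough to check that the only weak solution of the homogeneous problem (obtained by replacing $\widetilde{\mathbf{f}}$ with $0$) is the zero solution. Let $(\mathbf{w}, q) \in H_{0}^{1}\left( \Omega ;\mathbb{R}\right) ^{N}\times \left( L^{2}\left( \Omega ;\mathbb{R}\right) \mathfrak{/}\mathbb{R}\right)$ be such a solution. Since $div\, \mathbf{w} = 0$ and $\mathbf{w} = 0$ on $\partial \Omega$, we have $\mathbf{w} \in \mathbf{V}$, and taking $\mathbf{w}$ as a test function in the weak form of the homogeneous problem eliminates the pressure term, leaving
\begin{equation*}
\sum_{i,j,k,h=1}^{N} q_{ijkh} \int_{\Omega} \frac{\partial w^{h}}{\partial x_{j}} \frac{\partial w^{k}}{\partial x_{i}}\, dx + \widetilde{\mathcal{\theta}} \int_{\Omega} \left\vert \mathbf{w}\right\vert ^{2}\, dx = 0.
\end{equation*}

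The crux is to recognize the first summand as a non-negative quadratic form by importing the corrector from Lemma \ref{lem3.3}. I would define $\mathbf{w}_{1}(x,y) := -\sum_{i,k=1}^{N}(\partial w^{k}/\partial x_{i})(x)\, \mathbf{\chi}_{ik}(y)$, so that $\mathbf{W}:=\left( \mathbf{w},\mathbf{w}_{1}\right) \in \mathbb{F}_{0}^{1}$. Expanding $\mathbb{D}_{j}\mathbf{W}^{k}$ and invoking the cell problem (\ref{eq2.19}) tested against the correctors $\mathbf{\chi}_{st}$, one verifies that the first sum above coincides with $\sum_{i,j,k}\int \int_{\Omega \times Y^{\ast}} a_{ij}\, \mathbb{D}_{j}\mathbf{W}^{k}\, \mathbb{D}_{i}\mathbf{W}^{k}\, dxdy$; this is exactly the identity by which the effective coefficients (\ref{eq2.21a}) arise when $\mathbf{u}_{1}$ is eliminated from (\ref{eq2.12}). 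Rewriting $\widetilde{\mathcal{\theta}} \int_{\Omega} \left\vert \mathbf{w}\right\vert ^{2}\, dx$ as the boundary integral $\int \int_{\Omega \times \partial T} \mathcal{\theta}(y)\, \mathbf{w}(x){\small \cdot }\mathbf{w}(x)\, dxd\sigma(y)$ then collapses the identity into $\widehat{a}_{\Omega}\left( \mathbf{W},\mathbf{W}\right) = 0$.

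The coercivity hypotheses (\ref{eq1.2a})--(\ref{eq1.2}) immediately yield the bound $\widehat{a}_{\Omega}\left( \mathbf{W},\mathbf{W}\right) \geq \alpha_{0}\left\vert \partial T\right\vert \left\Vert \mathbf{w}\right\Vert _{L^{2}\left( \Omega \right) ^{N}}^{2}$, which forces $\mathbf{w} \equiv 0$. Substituting $\mathbf{w} = 0$ into the homogeneous version of (\ref{eq2.21}) gives $\mathbf{grad}\, q = 0$ in $\mathcal{D}^{\prime}\left( \Omega \right) ^{N}$; since $\Omega$ is connected, $q$ is a constant, and the zero-mean constraint defining $L^{2}\left( \Omega ;\mathbb{R}\right) \mathfrak{/}\mathbb{R}$ then forces $q = 0$. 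The main obstacle is the algebraic identification in the middle paragraph: one must check that inserting the corrector ansatz $\mathbf{w}_{1}$ into $\widehat{a}_{\Omega}$ reproduces exactly the homogenized tensor $q_{ijkh}$ of (\ref{eq2.21a}). This is the symmetric mirror of the elimination of $\mathbf{u}_{1}$ that produces the macroscopic equation (\ref{eq2.21}); once in hand, the Hilbert-space uniqueness step is automatic.
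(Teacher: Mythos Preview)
Your argument is correct and follows essentially the same strategy as the paper: both proofs lift a macroscopic solution to the two-scale variational problem via the corrector ansatz $\mathbf{w}_{1}(x,y)=-\sum_{i,k}(\partial w^{k}/\partial x_{i})\,\boldsymbol{\chi}_{ik}(y)$ and then exploit the resulting identity with $\widehat{a}_{\Omega}$. The paper's version is simply more compressed: it observes that any weak solution $(\mathbf{u}_{0},p_{0})$ of (\ref{eq2.21})--(\ref{eq2.23}) yields, through (\ref{eq2.20a}), a pair $(\mathbf{u}_{0},\mathbf{u}_{1})\in\mathbb{F}_{0}^{1}$ satisfying (\ref{eq2.12}), and then invokes the uniqueness of $\mathbf{u}_{0}$ already recorded in Lemma~\ref{lem3.2}; the pressure uniqueness is left implicit. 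Your proof unpacks the same mechanism---the algebraic identification of the homogenized quadratic form with $\widehat{a}_{\Omega}(\mathbf{W},\mathbf{W})$ and the coercivity step---and adds the explicit argument for $q=0$, so it is a self-contained variant of the paper's route rather than a genuinely different one.
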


\begin{proof}
There is no difficulty to verify that if a couple $\left( \mathbf{u}%
_{0},p_{0}\right) \in H_{0}^{1}\left( \Omega ;\mathbb{R}\right) ^{N}\times
L^{2}\left( \Omega ;\mathbb{R}\right) \mathfrak{\ }$satisfies (\ref{eq2.21}%
)-(\ref{eq2.23}), then the vector function $\mathbf{u}=\left( \mathbf{u}_{0},%
\mathbf{u}_{1}\right) $ [with $\mathbf{u}_{1}$ given by (\ref{eq2.20a})]
satisfies (\ref{eq2.12}). Hence the unicity in (\ref{eq2.21})-(\ref{eq2.23})
follows by Lemma \ref{lem3.2}.
\end{proof}

This leads us to the following theorem.

\begin{theorem}
\label{th3.2} Suppose that the hypotheses (\ref{eq1.2a})-(\ref{eq1.2}) and (%
\ref{eq1.8a}) are satisfied. For each real $0<\varepsilon <1$, let $\left( 
\mathbf{u}_{\varepsilon },p_{\varepsilon }\right) \in \mathbf{V}%
_{\varepsilon }\times \left( L^{2}\left( \Omega ^{\varepsilon };\mathbb{R}%
\right) \mathfrak{/}\mathbb{R}\right) $ be defined by (\ref{eq1.3})-(\ref%
{eq1.7}) and let $\mathcal{P}^{\varepsilon }$ be the extension operator of
Proposition \ref{pr2.1}. Then, as $\varepsilon \rightarrow 0$, we have $%
\mathcal{P}^{\varepsilon }\mathbf{u}_{\varepsilon }\rightarrow \mathbf{u}%
_{0} $ in $H_{0}^{1}\left( \Omega \right) ^{N}$-weak and $\widetilde{p}%
_{\varepsilon }\rightarrow p_{0}$ in $L^{2}\left( \Omega \right) $-weak ($%
\widetilde{p}_{\varepsilon }$ being the extension of $p_{\varepsilon }$ by
zero to $\Omega $ ), where the couple $\left( \mathbf{u}_{0},p_{0}\right) $
lies in $H_{0}^{1}\left( \Omega ;\mathbb{R}\right) ^{N}\times \left(
L^{2}\left( \Omega ;\mathbb{R}\right) \mathfrak{/}\mathbb{R}\right) $ and is
the unique weak solution to (\ref{eq2.21})-(\ref{eq2.23}).
\end{theorem}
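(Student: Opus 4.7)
The plan is to combine the two-scale convergence result of Theorem \ref{th3.1} with the corrector representation of Lemma \ref{lem3.3} in order to identify the macroscopic equation satisfied by $(\mathbf{u}_0,p_0)$, and then to invoke the uniqueness Lemma \ref{lem3.4} to upgrade subsequential convergence to convergence along the whole family $0<\varepsilon<1$.

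Starting from a fundamental sequence $E$, Theorem \ref{th3.1} furnishes a subsequence $E'\subset E$, together with $\mathbf{u}_0\in\mathbf{V}$, some $\mathbf{u}_1\in L^2(\Omega;\mathbf{V}_Y)$ and $p_0\in L^2(\Omega;\mathbb{R})$, such that $\mathcal{P}^\varepsilon\mathbf{u}_\varepsilon\to\mathbf{u}_0$ weakly in $H_0^1(\Omega)^N$, $\widetilde{p}_\varepsilon\to p_0$ weakly in $L^2(\Omega)$, and the extended identity of (\ref{eq2.18}),
\[
\widehat{a}_\Omega(\mathbf{u},\mathbf{\phi})-\int_\Omega p_0\,\mathrm{div}\,\mathbf{\phi}_0\,dx=\widehat{l}_\Omega(\mathbf{\phi}),
\]
holds for every $\mathbf{\phi}=(\mathbf{\phi}_0,\mathbf{\phi}_1)$ with $\mathbf{\phi}_0\in\mathcal{D}(\Omega;\mathbb{R})^N$ and $\mathbf{\phi}_1\in\mathcal{D}(\Omega;\mathbb{R})\otimes\mathcal{V}_Y$, where $\mathbf{u}=(\mathbf{u}_0,\mathbf{u}_1)$. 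Weak $L^2$-passage to the limit in $\int_\Omega\widetilde{p}_\varepsilon\,dx=\int_{\Omega^\varepsilon}p_\varepsilon\,dx=0$ yields $\int_\Omega p_0\,dx=0$, so $p_0\in L^2(\Omega;\mathbb{R})/\mathbb{R}$.

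The heart of the argument is to specialize the identity above to $\mathbf{\phi}_1=0$ and to substitute in the representation
\[
\mathbf{u}_1(x,y)=-\sum_{i,k=1}^{N}\frac{\partial u_0^k}{\partial x_i}(x)\,\mathbf{\chi}_{ik}(y)+\mathbf{g}(x,y)
\]
from Lemma \ref{lem3.3}; the additive field $\mathbf{g}$ contributes nothing because $\partial g^k/\partial y_j=0$ a.e.\ in $\Omega\times Y^\ast$. Applying Fubini's theorem and renaming dummy indices according to (\ref{eq2.21a}), the bulk part of $\widehat{a}_\Omega(\mathbf{u},(\mathbf{\phi}_0,0))$ rewrites as
\[
\sum_{i,j,k,h=1}^{N}\int_\Omega q_{ijkh}\,\frac{\partial u_0^h}{\partial x_j}\,\frac{\partial\phi_0^k}{\partial x_i}\,dx.
\]
Similarly, the surface integral over $\Omega\times\partial T$ collapses to $\int_\Omega\widetilde{\theta}\,\mathbf{u}_0\cdot\mathbf{\phi}_0\,dx$ after integration in $y$ against $d\sigma$, and $\widehat{l}_\Omega((\mathbf{\phi}_0,0))$ reduces to $\int_\Omega\widetilde{\mathbf{f}}\cdot\mathbf{\phi}_0\,dx$ by integration over $Y^\ast$.

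The resulting identity, valid for every $\mathbf{\phi}_0\in\mathcal{D}(\Omega;\mathbb{R})^N$, is precisely the distributional form of (\ref{eq2.21}); combined with $\mathrm{div}\,\mathbf{u}_0=0$ (inherited by weak convergence from $\mathrm{div}\,\mathcal{P}^\varepsilon\mathbf{u}_\varepsilon=0$ via Proposition \ref{pr2.1}) and with $\mathbf{u}_0|_{\partial\Omega}=0$ from $\mathbf{u}_0\in H_0^1(\Omega)^N$, it shows that $(\mathbf{u}_0,p_0)\in H_0^1(\Omega;\mathbb{R})^N\times(L^2(\Omega;\mathbb{R})/\mathbb{R})$ is a weak solution to (\ref{eq2.21})--(\ref{eq2.23}). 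Lemma \ref{lem3.4} then forces this solution to be unique, so the limits $\mathbf{u}_0,p_0$ do not depend on the choice of $E'$; a standard subsequence-of-subsequence argument then upgrades the convergence to the whole family $0<\varepsilon<1$. The only delicate step in the whole scheme is the index bookkeeping that converts the corrector substitution into the coefficient formula (\ref{eq2.21a}); every other ingredient is either a direct quotation of an earlier result or a routine passage to the two-scale limit using Lemma \ref{lem2.4} and Remark \ref{rem2.2}.
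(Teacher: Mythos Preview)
Your proposal is correct and follows essentially the same route as the paper's own proof: extract a subsequence via Theorem~\ref{th3.1}, substitute the corrector representation (\ref{eq2.20a}) of Lemma~\ref{lem3.3} into (\ref{eq2.18}) with $\mathbf{\phi}_1=0$ to obtain the weak form of (\ref{eq2.21}), and then invoke Lemma~\ref{lem3.4} together with a subsequence-of-subsequence argument. You have merely spelled out the ``simple computation'' and the ``obvious argument'' that the paper leaves implicit, and you additionally check that $p_0$ has zero mean, which the paper omits.
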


\begin{proof}
In view of the proof of Theorem \ref{th3.1}, from any given fundamental
sequence $E$ one can extract a subsequence $E^{\prime }$ such that as $%
E^{\prime }\ni \varepsilon \rightarrow 0$, we have (\ref{eq2.15a})-(\ref%
{eq2.15c}) and $\widetilde{p}_{\varepsilon }\rightarrow p_{0}$ in $%
L^{2}\left( \Omega \right) $-weak. Further (\ref{eq2.18}) holds for all $%
\mathbf{\phi }\mathbf{=}\left( \mathbf{\phi }_{0},\mathbf{\phi }_{1}\right)
\in \mathcal{D}\left( \Omega ;\mathbb{R}\right) ^{N}\times \left[ \mathcal{D}%
\left( \Omega ;\mathbb{R}\right) \otimes \mathcal{V}_{Y}\right] $, with $%
\mathbf{u}=\left( \mathbf{u}_{0},\mathbf{u}_{1}\right) \in \mathbb{F}_{0}^{1}
$. Now, substituting (\ref{eq2.20a}) in (\ref{eq2.18}) and then choosing
therein the $\mathbf{\phi }$'s such that $\mathbf{\phi }_{1}=0$, a simple
computation leads to (\ref{eq2.21}) with evidently (\ref{eq2.23}). Hence the
theorem follows by Lemma \ref{lem3.4} and use of an obvious argument.
\end{proof}

\noindent \textbf{Acknowledgements }\textit{This work is supported by the
national allocation for research modernisation of the Cameroon government.}


\begin{thebibliography}{10}
\bibitem[1]{bib1} G. Allaire, Homogenization and two-scale convergence, SIAM
J. Math. Anal., \textbf{23 }(1992), 1482-1518.

\bibitem[2]{bib2} G. Allaire, A. Damlamian, U. Hornung, Two-Scale
Convergence on Periodic Surfaces and Applications,

\bibitem[3]{bib3} A. Bensoussan, J.L. Lions, G. Papanicolaou, Asymptotic
analysis for periodic structures, North-Holland, Amsterdam,1978.

\bibitem[4]{bib4} D. Cioranescu, P. Donato, H. I. Ene, Homogenization of the
Stokes problem with non homogeneous slip boundary conditions, Math. Meth.
Appl. Sci. 19 (1996), 857-881.

\bibitem[5]{bib5} D. Cioranescu, J. Saint Jean Paulin, Homogenization in
open sets with holes, J. Math. Anal. Appl. 71 (1979) 590-607.

\bibitem[6]{bib6} C. Conca, On the Application of Homogenization theory to a
class of problems arising in Fluids Mechanics, J. Math. pures et appl. 64,
1987, p. 31 \`{a} 75.

\bibitem[7]{bib7} C. Conca, Etude d'un fluide traversant une paroi perfor%
\'{e}e, I. Comportement limite pres de la paroi, J. Math. pures et appl. 66,
1987, p. 1 \`{a} 43.

\bibitem[8]{bib8} G. Nguetseng, A general convergence result for a
functional related to the theory of homogenization, SIAM J. Math. Anal. 
\textbf{20 }(1989), 608-623.

\bibitem[9]{bib9} G. Nguetseng, Homogenization Structures and Applications
I, Z. Anal. Anw., 22 (2003), 73-107.

\bibitem[10]{bib10} G. Nguetseng, Sigma-convergence of parabolic
differential operators, Multiple scales problems in biomathematics,
mechanics, physics and numerics, Gakuto inter. ser., Mathematical Sciences
and Applications 31, 93-132 (2009)\textit{.}

\bibitem[11]{bib11} G. Nguetseng, Homogenization in perforated domains
beyond the periodic setting, J. Math. Anal. Appl. 289 (2004) 608-628.

\bibitem[12]{bib12} G. Nguetseng and H. Nnang, Homogenization of nonlinear
monotone operators beyond the periodic setting, \textit{Elect. J. Differ.
Eqns.}, \textbf{2003} (2003), 1-24.

\bibitem[13]{bib13} G. Nguetseng, L. Signing, Deterministic homogenization
of stationary Navier-Stokes type equations, Multiscale Problems, Theory,
Numerical Approximation and Applications (2011): pp.70-108.

\bibitem[14]{bib14} E. Sanchez-Palencia, Nonhomogeneous Media and Vibration
Theory, Lecture Notes in Physics, 127, Springer-Verlag, Berlin, 1980.

\bibitem[15]{bib15} L. Tartar, Incompressible fluid flow in Porous Medium,
Convergence of the homogenization process, Appendix to the Lecture Notes in
Physics 127, Springer-Verlag, 1980.

\bibitem[16]{bib16} L. Tartar, Topics in Nonlinear Analysis, Publications
math\'{e}matiques d'Orsay 78.13, Universit\'{e} de Paris-Sud (1978).

\bibitem[17]{bib17} R. Temam, Navier-Stokes Equations, North-Holland, 1979.
\end{thebibliography}
\end{document}